\newtheorem*{theorem*}{Theorem}
\newtheorem{theorem}{Theorem}
\newtheorem{lemma}{Lemma}
\newtheorem{proposition}{Proposition}
\newtheorem*{proposition*}{Proposition}
\newtheorem{conjecture}{Conjecture}
\newtheorem*{conjecture*}{Conjecture}
\newtheorem{corollary}{Corollary}
\newtheorem*{corollary*}{Corollary}
\newcommand{\Ra}{\Rightarrow}
\newcommand{\llr}{\Longleftrightarrow}
\newcommand{\ds}{\displaystyle}
\newcommand{\tL}{\tilde{L}}
\newcommand{\tp}{\tilde{p}}
\newcommand{\tq}{\tilde{q}}
\newcommand{\tr}{\tilde{r}}
\newcommand{\tQ}{\tilde{Q}}
\newcommand{\tS}{\tilde{S}}
\newcommand{\pa}{\partial}
\newcommand{\al}{\alpha}
\newcommand{\be}{\beta}
\newcommand{\ga}{\gamma}
\newcommand{\Ga}{\Gamma}
\newcommand{\de}{\delta}
\newcommand{\eps}{\epsilon}
\newcommand{\ze}{\zeta}
\newcommand{\ka}{\kappa}
\newcommand{\si}{\sigma}
\date{February 18, 2019}
\begin{document}
\setlength{\voffset}{-1.0cm}

\begin{titlepage}
\title{On a property of random walk polynomials involving Christoffel functions}

\author{\\Erik A. van Doorn$^a$ and Ryszard Szwarc$^b$\\
\\$^a$Department of Applied Mathematics, University of Twente \\
P.O. Box 217, 7500 AE Enschede, The Netherlands\\
E-mail: e.a.vandoorn@utwente.nl\\
\\$^b$Institute of Mathematics, Wroc{\l}aw University\\
pl. Grunwaldzki 2/4, 50-384 Wroc{\l}aw, Poland\\
E-mail: ryszard.szwarc@math.uni.wroc.pl}

\maketitle
\thispagestyle{empty}

\noindent{\bf Abstract.} 
Discrete-time birth-death processes may or may not have certain 
properties known as {\em asymptotic aperiodicity\/} and the {\em strong ratio
limit property\/}. In all cases known to us a suitably normalized process having
one property also possesses the other, suggesting equivalence of the two
properties for a normalized process. We show that equivalence may be
translated into a property involving Christoffel functions for a type of
orthogonal polynomials known as {\it random walk polynomials}. The prevalence
of this property -- and thus the equivalence of asymptotic aperiodicity and the
strong ratio limit property for a normalized birth-death process -- is proven under
mild regularity conditions.

\bigskip
\noindent{\em Keywords and phrases:\/} 
(asymptotic) period, (asymptotic) aperiodicity, birth-death process, random
walk polynomials, random walk measure, ratio limit, transition probability

\bigskip
\noindent{\em 2000 Mathematics Subject Classification:\/} Primary 42C05,
Secondary 60J80

\end{titlepage}

\renewcommand{\baselinestretch}{1.5} \small\normalsize

\leftmargin 25 pt
\leftmargini 25 pt

\section{Introduction}

In what follows $\mathcal{X} := \{X(n),~n=0,1,\ldots\}$ is a (discrete-time)
birth-death process on $\mathcal{N} := \{0,1,\dots\}$, with tridiagonal
matrix of one-step transition probabilities
\[
P \equiv (P_{ij})_{i,j\in\mathcal{N}} :=
\begin{pmatrix}
r_0 & p_0 & 0 & 0 & 0 & \ldots\cr
         q_1 & r_1 & p_1 & 0 & 0 & \ldots\cr
         0 & q_2 & r_2 & p_2 & 0 & \ldots\cr
         \ldots & \ldots & \ldots & \ldots & \ldots & \ldots\cr
         \ldots & \ldots & \ldots & \ldots & \ldots & \ldots\cr
\end{pmatrix}.
\]
We assume throughout that $p_j>0,~q_{j+1}>0,~r_j \geq 0,$ and (save for the
last section) that $p_j+q_j+r_j = 1$ for $j \in \mathcal{N}$, where $q_0 := 0$.
The polynomials $Q_n$ are defined by the recurrence relation
\begin{equation}
\label{recQ}
\begin{array}{l}
xQ_n(x)=q_nQ_{n-1}(x)+r_nQ_n(x)+p_nQ_{n+1}(x),\quad n > 1,\\
Q_0(x)=1,\quad  p_0Q_1(x)=x-r_0,
\end{array}
\end{equation}
so that $Q_n(1) = 1$ for all $n$.
Karlin and McGregor \cite{KM59} referred to $\mathcal{X}$ as a {\em random
walk\/} and to $\{Q_n\}$ as a sequence of {\em random walk polynomials\/}. Since
the latter terminology is rather well established (contrary to the former\/) we will
stick with it. But note that the random walk polynomials in, for example, Askey and
Ismail \cite{AI84} have $r_j=0$ for all $j$, so the present setting is more general. 

It has been shown in \cite{KM59} that the $n$-step transition probabilities
\[
P_{ij}(n) := \Pr\{X(n)=j\,|\,X(0)=i\}, \quad i,j \in \mathcal{N}, ~n\geq 0,
\]
which satisfy $P_{ij}(n) = (P^n)_{ij}$, may also be represented in the form
\begin{equation}
\label{repP}
P_{ij}(n) = \pi_j\int_{[-1,1]}x^nQ_i(x)Q_j(x)\psi(dx),
\quad i,j \in \mathcal{N}, ~n\geq 0,
\end{equation}
where
\[
\pi_0 := 1,~~ \pi_j := \frac{p_0p_1\ldots p_{j-1}}{q_1q_2\ldots q_j},
\quad j \geq 1, 
\]
and $\psi$ is the (unique) Borel measure on the real axis of total mass 1 with
respect to which the polynomials $Q_n$ are orthogonal. Moreover, supp($\psi$),
the support of the measure $\psi$, is infinite and a subset of the interval
$[-1,1]$. Adopting the terminology of \cite{DS93} we will refer to $\psi$ as a
{\it random walk measure\/}. 

The process $\mathcal{X}$ is said to have the {\em strong ratio limit property\/}
if the limits
\begin{equation}
\label{limP}
\lim_{n\to\infty} \frac{P_{ij}(n)}{P_{kl}(n)}, \quad i,j,k,l \in \mathcal{N},
\end{equation}
exist simultaneously. $\mathcal{X}$ is {\em asymptotically periodic\/} if, in
the long run, the process evolves cyclically between the even and the odd
states, and {\em asymptotically aperiodic\/} otherwise. These properties
will be discussed in more detail in Section 2. At this point we only remark
that in all cases known to us a suitably normalized process having the
strong ratio limit property is also asymptotically aperiodic, and vice versa. So
we conjecture that for a birth-death process that is normalized (in a sense to
be defined in the next section) the two properties are in fact equivalent.

It will be shown in this paper that equivalence of the strong ratio limit
property and asymptotic aperiodicity for a normalized birth-death process may
be translated into a property of random walk polynomials and the associated
measure involving {\em Christoffel functions\/}. Concretely, with $\rho_n$
denoting the $n$th Christoffel function associated with the random walk
measure $\psi$, and $\eta$ the largest point in the support of $\psi$, we have
equivalence of the two properties for the corresponding normalized birth-death
process if and only if
\begin{equation}
\label{conj1}
\lim_{n\to\infty}\frac{\int_{[-1,0)}(-x)^n\psi(dx)}{\int_{(0,1]} x^n\psi(dx)}=0
 ~~\llr~~  \lim_{n\to\infty}\frac{\rho_n(-\eta)}{\rho_n(\eta)} = 0.
\end{equation}
So our conjecture amounts to validity of \eqref{conj1}. But actually we
conjecture validity of the stronger property
\begin{equation}
\label{conj2}
\lim_{n\to\infty}\frac{\int_{[-1,0)}(-x)^n\psi(dx)}{\int_{(0,1]} x^n\psi(dx)}=
\lim_{n\to\infty}\frac{\rho_n(-\eta)}{\rho_n(\eta)},
\end{equation}
if the left-hand limit exists.
We will subsequently disclose mild conditions for \eqref{conj2} to prevail,
and hence for equivalence of the strong ratio limit property and asymptotic
aperiodicity for a normalized birth-death process.

The next section contains a number of preliminary and introductory results. 
Then, in Section \ref{con}, the conjectured property of random walk polynomials
is motivated and its relation with the associated birth-death process is
discussed. In the Sections \ref{Cnpsi} and \ref{ratio} we collect a number of
asymptotic results for the quantities featuring in the conjectured property of
random walk polynomials. Our main conclusions -- sufficient conditions for
\eqref{conj2} to be valid -- are drawn in Section \ref{res}. In the last section
the consequences of allowing $p_j+q_j+r_j \le 1$ will be examined.

\section{Preliminaries}
\label{pre}

This section contains additional information on the strong ratio limit property
and on asymptotic aperiodicity of a birth-death process. We also define the
normalization of a birth-death process referred to in the introduction, and
start off by collecting a number of relevant properties of the random walk
polynomials $Q_n$ and the measure $\psi$ with respect to which they are
orthogonal.

\subsection{Random walk polynomials and measure}

By \eqref{repP} we have
\[
r_j \equiv (P)_{jj} = \pi_j\int_{[-1,1]} xQ_j^2(x)\psi(dx),
\quad j \in \mathcal{N},
\]
so our assumption $r_j \geq 0$ implies
\begin{equation}
\label{rw}
\int_{[-1,1]} xQ_n^2(x)\psi(dx) \geq 0, \quad n \geq 0.
\end{equation}
Whitehurst \cite[Theorem 1.6]{W82} has shown that, conversely, any Borel measure
$\psi$ on the interval $[-1,1]$, of total mass 1 and with infinite support, is a
random walk measure if it satisfies \eqref{rw} (see also \cite[Theorem 1.2]{DS93}).

Obviously $P_{ij}(0)= \delta_{ij}$ (Kronecker's delta), so, letting
\begin{equation}
\label{pn}
p_n(x) := \sqrt{\pi_n}Q_n(x), \quad n \geq 0,
\end{equation}
\eqref{repP} leads to
\[
\int_{[-1,1]} p_i(x)p_j(x)\psi(dx) = \delta_{ij}, \quad i,j \geq 0,
\]
that is, $\{p_n\}$ constitutes the sequence of {\it orthonormal\/} polynomials
with respect to the random walk measure $\psi$. Writing $p_n(x) = \ga_nx^n+\dots$
we note for future reference that
\begin{equation}
\label{gamma}
\ga_n^{-2} = \prod_{i=1}^n p_{i-1}q_i, \quad n\geq 1.
\end{equation}
The {\it Christoffel functions\/} $\rho_n$ associated with $\psi$ are defined
by
\begin{equation}
\rho_n(x) := \left\{\sum_{j=0}^{n-1}p_j^2(x)\right\}^{-1}, \quad n \geq 1.
\end{equation}
A direct relation between the measure $\psi$ and its Christoffel functions
is given by the classic result (Shohat and Tamarkin \cite[Corollary 2.6]{ST63})
\begin{equation}
\label{ST}
\lim_{n\to\infty}\rho_n(x) = \psi(\{x\}), \quad x \in \mathbb{R}.
\end{equation}

Of particular interest to us is $\eta := \sup \mbox{supp}(\psi)$, the largest point
of the support of the measure $\psi$, which may also be characterized in terms of
the polynomials
$Q_n$ by
\begin{equation}
\label{Qpos}
x \geq \eta ~~\llr ~~ Q_n(x) > 0 \mbox{~~for all~~} n \geq 0
\end{equation}
(see, for example, Chihara \cite[Theorem II.4.1]{C78}). Evidently, \eqref{rw}
already implies $\eta > 0$, but it can actually be shown (see, for example,
\cite[Corollary 2 to Theorem IV.2.1]{C78}) that
\begin{equation}
\label{loweta}
 0 \leq r_j < \eta \leq 1, \quad j \in\mathcal{N}.
\end{equation}
Letting $\ze:=\inf \mbox{supp}(\psi)$ we also have
\[
\inf_j \{r_j+r_{j+1}\}\leq \ze+\eta \leq \sup_j\{r_j+r_{j+1}\}, \quad j \in\mathcal{N},
\]
by \cite[Lemma 2.3]{DS95a}. It follows that 
\begin{equation}
\label{infsupp}
\ze \geq -\eta,
\end{equation}
and hence supp$(\psi) \subset [-\eta,\eta]$. Moreover, the counterpart of
\eqref{Qpos} (obtained from \eqref{Qpos} by considering, instead of $Q_n(x)$,
the polynomials $(-1)^nQ_n(-x)$) gives us
\begin{equation}
\label{Qneg}
x \leq \ze ~~\llr ~~ (-1)^nQ_n(x) > 0 \mbox{~~for all~~} n \geq 0.
\end{equation}

The recurrence relations \eqref{recQ} imply the {\em Christoffel-Darboux\/}
identity
\begin{equation}
\label{CD}
p_n\pi_n(Q_n(x)Q_{n+1}(y)-Q_n(y)Q_{n+1}(x))=(y-x)\sum_{j=0}^n\pi_jQ_j(x)Q_j(y)
\end{equation}
(see, for example, \cite[Theorem I.4.5]{C78}), whence, by \eqref{Qpos},
\begin{equation}
\eta \leq x < y ~~\Ra~~ Q_n(x)Q_{n+1}(y) > Q_n(y)Q_{n+1}(x) > 0
\mbox{~~for all~~} n \geq 0.
\end{equation}
Since $Q_n(1)=1$ for all $n$ this leads in particular to
\begin{equation}
\label{monQ}
\eta \leq x < 1 ~~\Ra~~  0 < Q_{n+1}(x) < Q_n(x) < Q_0(x)=1 \mbox{~~for all~~} n \geq 1.
\end{equation}

The measure $\psi$ is symmetric about $0$ if (and only if) the process
$\mathcal{X}$ is {\em periodic\/}, that is, if $r_j=0$ for all $j$ (see
\cite[p.~69]{KM59}). Evidently, the process will evolve cyclically between the even
and the odd states if it is periodic. The process is {\em aperiodic\/} if it is not
periodic. Whitehurst \cite[Theorem 5.2]{W78} has shown that
\begin{equation}
\label{whit}
\mathcal{X} \mbox{~is~aperiodic}~~\Ra~~
\int_{[-\eta,\eta]}\frac{\psi(dx)}{\eta+x} < \infty,
\end{equation}
so that in particular $\psi(\{-\eta\}) = 0$ if $\mathcal{X}$ is aperiodic.
It will also be useful to note from \eqref{recQ} that
\begin{equation}
\label{sym}
\mathcal{X} \mbox{~is~periodic}~~\llr~~(-1)^nQ_n(-x) = Q_n(x), \quad n \geq 0.
\end{equation}

We now introduce the normalization of the process $\mathcal{X}$ referred to in
the Introduction. Namely, letting $\tq_0:=0$ and
\begin{equation}
\label{tbd}
\tp_j := \frac{Q_{j+1}(\eta)}{Q_j(\eta)}\frac{p_j}{\eta},~~
\tr_j := \frac{r_j}{\eta},~~
\tq_{j+1} := \frac{Q_j(\eta)}{Q_{j+1}(\eta)}\frac{q_{j+1}}{\eta},
\quad j \in \mathcal{N},
\end{equation}
it follows from \eqref{recQ} and \eqref{Qpos} that $\tp_j>0,~\tq_{j+1}>0$,
$\tr_j \geq 0$, and $\tp_j+\tq_j+\tr_j=1$, so that the parameters $\tp_j,~\tq_j$
and $\tr_j$ may be interpreted as the one-step transition probabilities of a
birth-death process $\mathcal{\tilde{X}}$ on $\mathcal{N}$, the
{\em normalized\/} version of $\mathcal{X}$. Note that $\mathcal{\tilde{X}}$
is periodic if and only if $\mathcal{X}$ is periodic. Since $Q_n(1)=1$ for all $n$
we have $\mathcal{\tilde{X}} = \mathcal{X}$ if (and only if) $\eta=1$. By
\cite[Appendix 2]{DS95a} the random walk polynomials $\tQ_n$ and measure
$\tilde{\psi}$ associated with the process $\mathcal{\tilde{X}}$ may be
expressed as
\begin{equation}
\label{tQ}
\tQ_n(x) = \frac{Q_n(\eta x)}{Q_n(\eta)}, \quad n \geq 0.
\end{equation}
and
\begin{equation}
\label{tpsi}
\tilde{\psi}([-1,x]) = \psi([-\eta,x\eta]), \quad -1 \leq x \leq 1,
\end{equation}
respectively. Consequently,
\[
\tilde{\ze} :=\inf\mbox{supp}(\tilde{\psi}) = \frac{\ze}{\eta} \geq -1 
~~\mbox{and}~~ \tilde{\eta} := \sup \mbox{supp}(\tilde{\psi}) = 1.
\]
So normalizing $\mathcal{X}$ amounts to stretching the support of the associated
measure such that its largest point becomes $1$.

We know from \cite[Lemma 2.1]{D18a} that $(-1)^n\tQ_n(-1)$ is increasing, and
strictly increasing for $n$ sufficiently large, if $\tr_j>0$ for some $j\in
\mathcal{N}$, that is, if $\mathcal{\tilde{X}}$ is aperiodic.  It follows that
$|Q_n(\eta)/Q_n(-\eta)|$ is decreasing, and strictly decreasing for $n$
sufficiently large, if $\mathcal{X}$ is aperiodic. Since, by \eqref{sym},
$(-1)^n\tQ_n(-1) = \tQ_n(1) = 1$ for all $n$ if $\mathcal{X}$ is periodic, we
can conclude the following.
\begin{lemma}
\label{bounds}
If $\mathcal{X}$ is periodic then $Q_n^2(\eta)/Q_n^2(-\eta)=1$ for all $n$. If
$\mathcal{X}$ is aperiodic then $Q_n^2(\eta)/Q_n^2(-\eta)$ is decreasing and
tends to a limit satisfying
\[
0 \leq \lim_{n\to\infty}\frac{Q_n^2(\eta)}{Q_n^2(-\eta)} < 1.
\]
\end{lemma}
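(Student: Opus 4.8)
The plan is to take as given the monotonicity statements assembled just above the lemma and reduce the claim to (i) a single evaluation of the symmetry relation \eqref{sym}, and (ii) passing from the sequence $|Q_n(\eta)/Q_n(-\eta)|$ to its square. Once the cited result from \cite{D18a} is in hand, almost everything is sign bookkeeping.

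First I would treat the periodic case, which is immediate. By \eqref{sym}, periodicity of $\mathcal{X}$ is equivalent to $(-1)^nQ_n(-x)=Q_n(x)$ for all $n$, so evaluating at $x=\eta$ and squaring gives $Q_n^2(-\eta)=Q_n^2(\eta)$, hence $Q_n^2(\eta)/Q_n^2(-\eta)=1$ for every $n$.

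For the aperiodic case I would first confirm the identity $(-1)^n\tQ_n(-1)=|Q_n(-\eta)/Q_n(\eta)|$. Indeed \eqref{tQ} gives $\tQ_n(-1)=Q_n(-\eta)/Q_n(\eta)$, while \eqref{Qpos} yields $Q_n(\eta)>0$, and \eqref{infsupp} (via $-\eta\le\ze$) makes \eqref{Qneg} applicable at $x=-\eta$, so that $(-1)^nQ_n(-\eta)>0$; together these identify $(-1)^n\tQ_n(-1)$ with the displayed absolute ratio. The preceding discussion then tells us that $|Q_n(\eta)/Q_n(-\eta)|$ is decreasing, and strictly decreasing for $n$ large, when $\mathcal{X}$ (equivalently $\mathcal{\tilde{X}}$) is aperiodic. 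Since squaring a nonnegative decreasing sequence preserves monotonicity, $Q_n^2(\eta)/Q_n^2(-\eta)$ is decreasing as well, and being bounded below by $0$ it converges to a limit $\ge 0$, which already gives the left inequality.

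The hard part, and really the only nontrivial point, is the strict inequality $<1$ for the limit: monotone convergence by itself yields only $\le 1$. I would obtain strictness from the normalization $Q_0\equiv 1$, which makes the sequence start at $Q_0^2(\eta)/Q_0^2(-\eta)=1$, combined with the ``strictly decreasing for $n$ sufficiently large'' clause inherited from \cite[Lemma 2.1]{D18a}. Together these force some term, and therefore the limit, to drop strictly below $1$.
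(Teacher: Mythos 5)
Your proof is correct and follows essentially the same route as the paper: the paper's ``proof'' is the paragraph preceding the lemma, which invokes \cite[Lemma 2.1]{D18a} for the monotonicity of $(-1)^n\tQ_n(-1)$, identifies this quantity with $|Q_n(-\eta)/Q_n(\eta)|$ via \eqref{tQ}, and handles the periodic case through \eqref{sym}. You merely make explicit the sign bookkeeping (via \eqref{Qpos}, \eqref{infsupp}, \eqref{Qneg}) and the argument that $Q_0\equiv 1$ together with eventual strict decrease forces the limit strictly below $1$, both of which the paper leaves implicit.
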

\vspace{.3cm}
In view of \eqref{pn} this lemma tells us that the ratio $p_n^2(\eta)/p_n^2(-\eta)$
tends to a limit as $n\to\infty$, while, by \eqref{ST} and \eqref{whit},
\[
\mathcal{X} \mbox{~is~aperiodic}~~\Ra~~\lim_{n\to\infty}\frac{1}{\rho_n(-\eta)} =
\sum_{j=0}^{\infty}p_j^2(-\eta)= \infty.
\]
Applying the Stolz-Ces\`aro theorem therefore leads to the conclusion that, as
$n\to\infty$, the ratio $\rho_n(-\eta)/\rho_n(\eta)$ tends to a limit satisfying
\begin{equation}
\label{rhop}
\lim_{n\to\infty}\frac{\rho_n(-\eta)}{\rho_n(\eta)}=
\lim_{n\to\infty}\frac{p_n^2(\eta)}{p_n^2(-\eta)}
\end{equation}
if $\mathcal{X}$ is aperiodic. But \eqref{rhop} is obviously also valid if
$\mathcal{X}$ is periodic (both limits then being one), so we have the following
result.
\begin{proposition}
\label{rhoQ}
If $\mathcal{X}$ is periodic then $\rho_n(-\eta)/\rho_n(\eta) = 
Q_n^2(\eta)/Q_n^2(-\eta) = 1$ for all $n$. If $\mathcal{X}$ is aperiodic then
$\rho_n(-\eta)/\rho_n(\eta)$ tends, as $n\to\infty$, to a limit satisfying
\[
0 \leq \lim_{n\to\infty}\frac{\rho_n(-\eta)}{\rho_n(\eta)}=
\lim_{n\to\infty}\frac{Q_n^2(\eta)}{Q_n^2(-\eta)} < 1.
\]
\end{proposition}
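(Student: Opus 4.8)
The plan is to push everything through the orthonormal polynomials $p_n$ by means of \eqref{pn}: since $p_n(x)=\sqrt{\pi_n}Q_n(x)$, the factor $\pi_n$ cancels in a ratio, so $p_n^2(\eta)/p_n^2(-\eta)=Q_n^2(\eta)/Q_n^2(-\eta)$, and Lemma \ref{bounds} already describes the right-hand side in both regimes. The link to Christoffel functions is the elementary identity
\[
\frac{\rho_n(-\eta)}{\rho_n(\eta)}=\frac{\sum_{j=0}^{n-1}p_j^2(\eta)}{\sum_{j=0}^{n-1}p_j^2(-\eta)},
\]
so the entire task is to pass from the term ratio $p_n^2(\eta)/p_n^2(-\eta)$, whose limit is known, to the ratio of partial sums on the right.

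In the periodic case I would simply specialize \eqref{sym} to $x=\eta$, obtaining $(-1)^nQ_n(-\eta)=Q_n(\eta)$ and hence $Q_n^2(\eta)=Q_n^2(-\eta)$ for every $n$. The same relation at the level of the $p_j$ makes the numerator and denominator of the displayed quotient agree term by term, so both ratios in the assertion equal $1$ identically.

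The aperiodic case is where the real argument lives. The crux is to show that the denominator $\sum_{j=0}^{n-1}p_j^2(-\eta)$ diverges, for only then does the Stolz-Ces\`aro theorem apply to the ratio of partial sums. I would obtain this by combining Whitehurst's implication \eqref{whit}, which gives $\psi(\{-\eta\})=0$ under aperiodicity, with the Shohat-Tamarkin limit \eqref{ST} evaluated at $-\eta$; together they force $\lim_n\rho_n(-\eta)=\psi(\{-\eta\})=0$, equivalently $\sum_{j\ge 0}p_j^2(-\eta)=\infty$. Since the successive differences of the two partial sums are exactly $p_n^2(\eta)$ and $p_n^2(-\eta)$, their quotient is the term ratio, which converges by Lemma \ref{bounds} and \eqref{pn}; Stolz-Ces\`aro then equates the limit of the ratio of partial sums with this term-ratio limit, which is \eqref{rhop}. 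Chaining \eqref{rhop} with \eqref{pn} and the strict bound of Lemma \ref{bounds} delivers $0\le\lim_n\rho_n(-\eta)/\rho_n(\eta)=\lim_n Q_n^2(\eta)/Q_n^2(-\eta)<1$.

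The step I expect to be the main obstacle is precisely the divergence of $\sum_j p_j^2(-\eta)$: it is the only place where aperiodicity is genuinely used, and it rests on the nontrivial external input \eqref{whit}. Once that divergence is in hand, everything reduces to bookkeeping -- the cancellation of $\pi_n$, the symmetry \eqref{sym}, and a single invocation of Stolz-Ces\`aro -- and the strict inequality $<1$ is simply inherited from Lemma \ref{bounds} through the exact equality of limits, so no further care is needed there.
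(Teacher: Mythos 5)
Your proposal is correct and follows essentially the same route as the paper: the identity $\rho_n(-\eta)/\rho_n(\eta)=\sum_{j<n}p_j^2(\eta)/\sum_{j<n}p_j^2(-\eta)$, the divergence $\sum_j p_j^2(-\eta)=\infty$ obtained from \eqref{whit} and \eqref{ST} under aperiodicity, a single application of the Stolz--Ces\`aro theorem to equate the limit with the term ratio $p_n^2(\eta)/p_n^2(-\eta)=Q_n^2(\eta)/Q_n^2(-\eta)$, and Lemma \ref{bounds} together with \eqref{sym} for the bounds and the periodic case. You also correctly identify the divergence of the denominator as the one genuinely nontrivial input, which is exactly where the paper invokes Whitehurst's result.
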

With $\tilde{\rho}_n$ denoting the Christoffel functions associated with the
normalized process $\mathcal{\tilde{X}}$ it follows readily from \eqref{pn},
\eqref{tQ} and \eqref{rhop} that
\begin{equation}
\label{trho}
\lim_{n\to\infty}\frac{\rho_n(-\eta)}{\rho_n(\eta)}=
\lim_{n\to\infty}\frac{\tilde{\rho}_n(-1)}{\tilde{\rho}_n(1)},
\end{equation}
so in studying the asymptotic behaviour of the ratio
$\rho_n(-\eta)/\rho_n(\eta)$ it is no restriction to assume $\eta=1$.

We will see in the next subsections that Proposition \ref{rhoQ} enables us to
establish a link between the Christoffel functions associated with a sequence of
random walk polynomials and probabilistic properties of the normalized version
of the corresponding birth-death process.

\subsection{Strong ratio limit property}
\label{srlp}

The strong ratio limit property (SRLP) was introduced in the setting of
discrete-time Markov chains on a countable state space by Orey \cite{O61} and
Pruitt \cite{P65}, but the problem of finding conditions for the limits
\eqref{limP} to exist in the more restricted setting of discrete-time
birth-death processes had been considered before in \cite{KM59}. For more
information on the history of the problem we refer to \cite{DS95b} and
\cite{K95}.

A necessary and sufficient condition for the process $\mathcal{X}$ to possess
the SRLP is known in terms of the associated random walk measure $\psi$.
Namely, letting
\begin{equation}
\label{Cn}
C_n(\psi) := \frac{\int_{[-1,0)}(-x)^n\psi(dx)}{\int_{(0,1]} x^n\psi(dx)},
\quad n \geq 0,
\end{equation}
\cite[Theorem 3.1]{DS95b} tells us the following.
\begin{theorem}
\label{srlpth}
The process $\mathcal{X}$ has the SRLP if and only if $\ds\lim_{n\to\infty}C_n(\psi)
= 0$, in which case
\[
\lim_{n\to\infty} \frac{P_{ij}(n)}{P_{kl}(n)} =
\frac{\pi_jQ_i(\eta)Q_j(\eta)}{\pi_lQ_k(\eta)Q_l(\eta)}, \quad i,j,k,l \in\mathcal{N}.
\]
\end{theorem}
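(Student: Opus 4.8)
The plan is to start from the Karlin--McGregor representation \eqref{repP}, which reduces every ratio $P_{ij}(n)/P_{kl}(n)$ to a ratio of the weighted moments $\int_{[-1,1]}x^nQ_i(x)Q_j(x)\psi(dx)$, and to analyse these by splitting the integral at $0$. Writing $I_n^+:=\int_{(0,1]}x^n\psi(dx)$ and $I_n^-:=\int_{[-1,0)}(-x)^n\psi(dx)$, so that $C_n(\psi)=I_n^-/I_n^+$, the engine is a concentration (Laplace-type) lemma: for every continuous $g$,
\[
\frac{\int_{(0,1]}x^ng(x)\psi(dx)}{I_n^+}\to g(\eta)\quad\mbox{and}\quad\frac{\int_{[-1,0)}(-x)^ng(x)\psi(dx)}{I_n^-}\to g(\ze),
\]
the first limit always being meaningful since $\eta\in\mbox{supp}(\psi)$ with $\eta>0$. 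I would prove the first limit by reducing to $g(\eta)=0$, bounding the contribution of $(0,\eta-\delta]$ by $\|g\|_\infty(\eta-\delta)^n\psi((0,\eta-\delta])$ and that of $(\eta-\delta,\eta]$ by $\eps I_n^+$, and using $\eta\in\mbox{supp}(\psi)$ to get $I_n^+\ge c(\eta-\frac{\delta}{2})^n$, which makes the first contribution $o(I_n^+)$; the second limit is identical with $\ze=\inf\mbox{supp}(\psi)$. Combining this with $\int x^nQ_iQ_j\,\psi=\int_{(0,1]}+\int_{[-1,0)}$ (the atom at $0$, if any, contributes nothing for $n\ge1$) and $\int_{[-1,0)}x^n\cdots=(-1)^n\int_{[-1,0)}(-x)^n\cdots$ yields the uniform expansion
\[
\frac{P_{ij}(n)}{\pi_jI_n^+}=Q_i(\eta)Q_j(\eta)+(-1)^nC_n(\psi)Q_i(\ze)Q_j(\ze)+o(1)+o(C_n(\psi)),
\]
in which $Q_i(\eta)Q_j(\eta)>0$ by \eqref{Qpos}. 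This single identity drives both directions.

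For sufficiency, assume $C_n(\psi)\to0$. Here I do not even need the negative-side limit: the crude bound $\big|\int_{[-1,0)}x^nQ_iQ_j\,\psi\big|\le M\,I_n^-=M\,C_n(\psi)\,I_n^+$ shows the correction term is $o(I_n^+)$, so $P_{ij}(n)=\pi_jI_n^+(Q_i(\eta)Q_j(\eta)+o(1))$. Forming the ratio for $(i,j)$ and $(k,l)$, the common factor $I_n^+$ cancels and positivity of $Q_k(\eta)Q_l(\eta)$ keeps the denominator bounded away from $0$, giving at once the existence of all the limits (the SRLP) together with the stated value $\dfrac{\pi_jQ_i(\eta)Q_j(\eta)}{\pi_lQ_k(\eta)Q_l(\eta)}$.

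The necessity direction is where the real work lies, and I expect it to be the main obstacle, because a priori $C_n(\psi)$ need neither converge nor stay bounded, and the factor $(-1)^n$ makes the correction oscillate. If $\ze>-\eta$ then $I_n^-$ decays by a factor $|\ze|/\eta<1$ faster than $I_n^+$, so $C_n(\psi)\to0$ for free; assume therefore $\ze=-\eta$. The periodic case is disposed of separately: then $\psi$ is symmetric, $C_n(\psi)=1$ for all $n$, and $P_{00}(n)=0$ for odd $n$, so ratios across different parities oscillate and the SRLP fails, consistent with the claim. In the aperiodic case Lemma \ref{bounds} supplies an index $m$ with $Q_m^2(\eta)\ne Q_m^2(-\eta)$; applying the expansion to $(m,m)$ and $(0,0)$ (where $Q_0(\pm\eta)=1$) gives, with $t_n:=(-1)^nC_n(\psi)$,
\[
\frac{P_{mm}(n)}{P_{00}(n)}=\pi_m\,\frac{Q_m^2(\eta)+t_nQ_m^2(-\eta)+o(1)+o(|t_n|)}{1+t_n+o(1)+o(|t_n|)},
\]
and the function $f(t)=(Q_m^2(\eta)+tQ_m^2(-\eta))/(1+t)$ is strictly monotone. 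I would first rule out the degenerate possibilities that $t_n$ is unbounded or approaches the pole $t=-1$ along a subsequence: each would produce two distinct subsequential limits of $P_{mm}(n)/P_{00}(n)$ (using injectivity of $f$ and, for the pole, the fact that $P_{00}(n)\ge0$ forces $C_n(\psi)\le1$ for odd $n$), contradicting the SRLP. With $t_n$ thus confined to a compact subinterval of $(-1,\infty)$ on which $f$ is a homeomorphism, the error terms become $o(1)$ and convergence of $P_{mm}(n)/P_{00}(n)$ forces $f(t_n)$, hence $t_n=(-1)^nC_n(\psi)$, to converge; since $C_n(\psi)\ge0$, an alternating sequence can converge only to $0$, i.e. $C_n(\psi)\to0$, which closes the equivalence.
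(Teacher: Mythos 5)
Your argument is correct, but there is nothing in this paper to compare it against: Theorem \ref{srlpth} is not proved here at all, it is imported verbatim from \cite[Theorem 3.1]{DS95b}. Judged on its own merits, your proposal is a sound, self-contained derivation. Its engine -- the Laplace-type concentration of the measures $x^n\psi(dx)$ at the endpoints $\eta$ and $\ze$ -- is precisely the mechanism behind the lemmas of \cite{DS95b} that this paper \emph{does} invoke in proving Lemma \ref{supC}: your normalized integrals are essentially the functionals $L_n(f,\psi)$ of \eqref{Lf}, and your expansion reproduces the subsequential limit formulas $\bigl(Q_i(\eta)Q_j(\eta)+c\,Q_i(-\eta)Q_j(-\eta)\bigr)/(1+c)$ quoted there, so your route can be viewed as reconstructing the missing proof from \cite{DS95b} rather than diverging from it. The sufficiency half is routine, as you say, with positivity of the limiting denominator coming from \eqref{Qpos}. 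The necessity half is where the content lies, and your handling is correct: the bound $C_n(\psi)\le 1$ for odd $n$ (from $P_{00}(n)\ge 0$), the exclusion of unbounded $t_{2n}$ and of the pole $t=-1$ by exhibiting conflicting subsequential limits of $P_{mm}(n)/P_{00}(n)$ (finite versus infinite, or two distinct finite values via injectivity of $f$), and the closing alternation argument, which forces the limit of $(-1)^nC_n(\psi)$, hence of $C_n(\psi)$, to be $0$. Only two points deserve explicit mention in a full write-up: aperiodicity is what guarantees $P_{00}(n)>0$ for all large $n$, so the ratio you track is defined; and the index $m$ must satisfy $Q_m^2(\eta)\neq Q_m^2(-\eta)$ with $Q_m(-\eta)\neq 0$, which Lemma \ref{bounds} and \eqref{Qneg} supply in the aperiodic case with $\ze=-\eta$. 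What your route buys is independence from \cite{DS95b}; what the paper's citation buys is brevity.
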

Note that the denominator in \eqref{Cn} is positive since $\eta>0$, so that
$C_n(\psi)$ exists and is nonnegative for all $n$. Moreover, in view of \eqref{tpsi}
we clearly have
\begin{equation}
\label{tC}
C_n(\psi)=C_n(\tilde{\psi}), \quad n \geq 0,
\end{equation}
so normalization does not affect prevalence of the SRLP.

If $\mathcal{X}$ is periodic then $P_{ij}(n) =0$ if $n+i+j$ is odd, as a consequence
of \eqref{repP} and \eqref{sym}. Hence the limits in \eqref{limP} do not exist, which
is reflected by the fact that $C_n(\psi) = 1$ for all $n$ in this case. So aperiodicity is
necessary for $\mathcal{X}$ to have the SRLP. A sufficient condition for
$\mathcal{X}$ to have the SRLP is implied by \cite[Theorem 3.2]{DS95b}, which
states that
\begin{equation}
\label{QC}
\lim_{n\to\infty} \left|\frac{Q_n(\eta)}{Q_n(-\eta)}\right| = 0
~~\Ra~~ \lim_{n\to\infty} C_n(\psi) = 0.
\end{equation}
The reverse implication is conjectured in \cite{DS95b} to be valid as well.
We can actually establish a result that is stronger than \eqref{QC}.
\begin{lemma}
\label{supC}
We have
\[
0\leq\limsup_{n\to\infty} C_n(\psi) \leq
\lim_{n\to\infty} \frac{Q^2_n(\eta)}{Q^2_n(-\eta)}.
\]
\end{lemma}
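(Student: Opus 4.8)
The lower bound is immediate, since $C_n(\psi)\ge 0$ for every $n$ (both integrals are nonnegative and the denominator is positive because $\eta>0$). For the upper bound, the periodic case is trivial: then $\psi$ is symmetric about $0$, the two integrals defining $C_n(\psi)$ coincide, and $C_n(\psi)=1=\lim_n Q_n^2(\eta)/Q_n^2(-\eta)$. So assume $\mathcal{X}$ is aperiodic, write $b_n:=Q_n^2(\eta)/Q_n^2(-\eta)$, and recall from Proposition \ref{rhoQ} that $b_n$ decreases to $L:=\lim_n \rho_n(-\eta)/\rho_n(\eta)\in[0,1)$. Since $b_n\downarrow L$, it suffices to prove $\limsup_n C_n(\psi)\le L$. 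By \eqref{tC} and \eqref{tQ}, together with the identity $Q_n^2(\eta)/Q_n^2(-\eta)=\tQ_n^2(1)/\tQ_n^2(-1)$, there is no loss in assuming $\eta=1$, so that $Q_n(1)=1$ and $b_n=1/Q_n^2(-1)$.

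Write $v_n:=\int_{[-1,0)}(-x)^n\psi(dx)$ and $u_n:=\int_{(0,1]}x^n\psi(dx)$. Since $\mathrm{supp}(\psi)\subset[\ze,1]$ with $\ze\ge -1$, the mass of $v_n$ sits on $[\ze,0)$ and that of $u_n$ on $(0,1]$. The plan is to show that asymptotically $v_n$ is governed by $\psi$ near $\ze$ and $u_n$ by $\psi$ near $1$. If $\ze>-1$ then $v_n^{1/n}\to -\ze<1$ while $u_n^{1/n}\to 1$, so $v_n/u_n\to 0\le L$ and we are done; the substantive case is $\ze=-1$, where both integrals concentrate at $\pm 1$. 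I would justify the localization by a dichotomy: either the endpoint is an isolated atom, in which case concentration follows directly from \eqref{ST}, or the support accumulates at the endpoint, in which case comparing $\int_{[-1,-1+\de']}$ with $\int_{[-1+\de',0)}$ (and symmetrically at $+1$) shows the interior contribution is exponentially smaller than the endpoint contribution.

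With the integrals localized, I would bound the two one-sided endpoint moments against the Christoffel functions \emph{of the same index}: namely establish $u_n\ge r_n\,\rho_n(1)$ and $v_n\le r_n\,\rho_n(-1)\,(1+o(1))$ for a common rate factor $r_n$, the lower bound via the extremal characterization $\rho_n(1)=\min\{\int P^2\,\psi(dx):\deg P\le n-1,\ P(1)=1\}$ combined with the monotonicity \eqref{monQ} of $Q_n$ at the upper endpoint, and the upper bound by a matching estimate at $-1$. Dividing and invoking Proposition \ref{rhoQ}, i.e.\ $\rho_n(-1)/\rho_n(1)\to L$, then gives $\limsup_n v_n/u_n\le L$. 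This also refines \eqref{QC}: the case $L=0$ recovers $\lim_n C_n(\psi)=0$, and aperiodicity yields $\psi(\{-1\})=0$ through \eqref{whit}, consistent with the endpoint analysis.

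The crux is precisely this matched-index comparison. I expect the naive route --- inserting a \emph{fixed} weight $Q_m^2(x)$ and using continuity near the endpoints to extract the factor $b_m=Q_m^2(1)/Q_m^2(-1)$ --- to be circular: such a weight rescales the two endpoint contributions proportionally, so the residual ratio merely reproduces $b_m^{-1}\limsup_n C_n(\psi)$ and the intended inequality collapses to a tautology. The asymmetry between the endpoints must therefore enter through quantities whose index grows with the moment order, that is through $\rho_n(\pm 1)=1/\sum_{j<n}p_j^2(\pm 1)$ rather than through any fixed polynomial; equivalently, one must show directly that $v_n\sum_{j<n}p_j^2(-1)\lesssim u_n\sum_{j<n}p_j^2(1)$. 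Producing the one-sided estimate of the $n$th endpoint moment in terms of $\rho_n$ of the same index, with a rate that cancels in the quotient, is where the real work lies.
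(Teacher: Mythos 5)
Your lower bound, the periodic case, the reduction to $\eta=1$, and the case $\ze>-\eta$ are all fine (the last is essentially Proposition \ref{eps} of the paper). The fatal problem lies in the step you yourself flag as ``where the real work lies'': the index-matched comparison $u_n\ge \lambda_n\rho_n(1)$ and $v_n\le \lambda_n\rho_n(-1)(1+o(1))$ with a common rate factor $\lambda_n$, i.e.\ the pointwise asymptotic bound $C_n(\psi)\le(1+o(1))\,\rho_n(-1)/\rho_n(1)$. This is not merely unproven; it is false in general, so no amount of work can supply it. Take the Jacobi measure $\psi(dx)=c\,(1-x)^{\al}(1+x)^{\be}dx$ on $[-1,1]$ with $0<\al<\be$: its diagonal recurrence coefficients are $r_n=(\be^2-\al^2)/[(2n+\al+\be)(2n+\al+\be+2)]>0$, so \eqref{rw} holds and by Whitehurst's criterion this is an aperiodic random walk measure with $\ze=-1$, $\eta=1$. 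A direct computation with the Beta integral gives $u_n\asymp n^{-\al-1}$ and $v_n\asymp n^{-\be-1}$, hence $C_n(\psi)\asymp n^{\al-\be}$, while the Danka--Totik asymptotics quoted in Section \ref{ratio} give $\rho_n(1)\asymp n^{-2\al-2}$ and $\rho_n(-1)\asymp n^{-2\be-2}$, hence $\rho_n(-1)/\rho_n(1)\asymp n^{2(\al-\be)}$. Thus $C_n(\psi)\,/\,[\rho_n(-1)/\rho_n(1)]\asymp n^{\be-\al}\to\infty$: the lemma holds (both limits are $0$, as in Theorem \ref{main}$(iii)$), but $C_n(\psi)\le K\rho_n(-1)/\rho_n(1)$ fails for every constant $K$, and your rate factor would have to satisfy $n^{\be+1}\lesssim\lambda_n\lesssim n^{\al+1}$, which is impossible. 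The inequality of the lemma is intrinsically a statement about limits and cannot be routed through a quantitative comparison at matched index $n$.

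Ironically, the route you dismiss as circular --- integrating against a \emph{fixed} polynomial weight --- is exactly how the paper proves the lemma, and it is not circular. The symmetry-breaking input is probabilistic positivity, not continuity near the endpoints: by the representation formula \eqref{repP}, $\int x^nQ_j(x)Q_{j+1}(x)\,\psi(dx)$ is a positive multiple of $P_{j,j+1}(n)$ and hence nonnegative for \emph{every} $n$, while by \eqref{infsupp} and \eqref{Qneg} the fixed weight $Q_jQ_{j+1}$ is positive at $\eta$ and \emph{negative} at $-\eta$. Choosing a subsequence along which $C_{2n_k}(\psi)\to c_1:=\limsup_n C_{2n}(\psi)$, the lemmas of \cite{DS95b} (the rigorous version of your ``localization'' step) show that $L_{2n_k}(f,\psi):=\int x^{2n_k}f\,d\psi/\int x^{2n_k}d\psi\to[f(\eta)+c_1f(-\eta)]/(1+c_1)$, so nonnegativity forces $Q_j(\eta)Q_{j+1}(\eta)+c_1Q_j(-\eta)Q_{j+1}(-\eta)\ge0$, i.e.\ $c_1\le-Q_j(\eta)Q_{j+1}(\eta)/[Q_j(-\eta)Q_{j+1}(-\eta)]$ for every fixed $j$; letting $j\to\infty$ \emph{after} $n\to\infty$ yields $c_1\le\lim_nQ_n^2(\eta)/Q_n^2(-\eta)$, since each factor $|Q_j(\eta)/Q_j(-\eta)|$ converges. (Odd indices are treated with the weight $Q_j^2$, the sign now coming from the odd power of $x$, together with $\limsup_nC_{2n+1}(\psi)<1$ from \cite{DS95b}.) So a fixed weight does not ``rescale the two endpoint contributions proportionally'': its sign asymmetry at $\pm\eta$, played against positivity of the transition probabilities, is precisely what produces the bound, and the decoupling of the two limits ($n\to\infty$ first, then $j\to\infty$) is what makes a fixed polynomial sufficient where your growing-index scheme cannot work.
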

\begin{proof}\rm 
The first inequality is obvious since $C_n(\psi)\geq 0$ for all $n$. If
$\mathcal{X}$ is periodic, then, by \eqref{sym} and the fact that $\psi$ is
symmetric, both sides of the second inequality are one, so in the remainder of
this proof we will assume that $\mathcal{X}$ is aperiodic. Let
\[
c_1:= \limsup_{n\to\infty}C_{2n}(\psi), \quad c_2:= \limsup_{n\to\infty}C_{2n+1}(\psi),
\]
and
\begin{equation}
\label{Lf}
L_n(f,\psi):= \frac{\int_{[-\eta,\eta]}x^n f(x)\psi(dx)}{\int_{[-\eta,\eta]}x^n \psi(dx)},
\quad n\geq 0.
\end{equation}
In view of the representation formula \eqref{repP} the denominator in \eqref{Lf}
equals $P_{00}(n)$ and is therefore nonnegative for all $n$. But, $\mathcal{X}$
being aperiodic, we must have $P_{00}(n)>0$ for $n$ sufficiently large so the
denominator is actually positive for $n$ sufficiently large.
Choosing a subsequence $n_k$ of the positive integers such that $C_{2n_k}(\psi)
\to c_1$ as $k\to\infty$, we have, by \cite[Lemmas 3.1 and 3.2]{DS95b},
\[
\lim_{k\to\infty} L_{n_k}(Q_jQ_{j+1},\psi) =
\frac{Q_j(\eta)Q_{j+1}(\eta)+c_1Q_j(-\eta)Q_{j+1}(-\eta)}{1+c_1}.
\]
Since, by the representation formula \eqref{repP} again, $L_{n}(Q_jQ_{j+1},\psi)
\geq 0$ for all $n$, the limit must be nonnegative. Moreover, by \eqref{infsupp}
and \eqref{Qneg} we have $(-1)^n Q_n(-\eta) > 0$ for all $n \geq 0$, so that
$Q_j(-\eta)Q_{j+1}(-\eta)<0$. Hence 
\[
c_1 \leq -\frac{Q_j(\eta)Q_{j+1}(\eta)}{Q_j(-\eta)Q_{j+1}(-\eta)}, \quad j \geq 0,
\]
so that
\[
c_1 \leq \lim_{j\to\infty}-\frac{Q_j(\eta)Q_{j+1}(\eta)}{Q_j(-\eta)Q_{j+1}(-\eta)}
=\lim_{n\to\infty}\frac{Q^2_n(\eta)}{Q^2_n(-\eta)}.
\]

Turning to $c_2$ we first note that $0\leq c_2<1$ by \cite[Lemma 3.3]{DS95b}.
Next proceeding in a similar way as before, by considering $L_{n_k}(Q_j^2,\psi)$ with
$n_k$ a subsequence of the integers such that $C_{2n_k+1}(\psi)\to c_2$, we obtain
\[
\lim_{k\to\infty} L_{n_k}(Q^2_j,\psi) =
\frac{Q^2_j(\eta)-c_2Q^2_j(-\eta)}{1-c_2},
\]
so that
\[
c_2 \leq \lim_{n\to\infty} \frac{Q^2_n(\eta)}{Q^2_n(-\eta)},
\]
which completes the proof.
\end{proof}

In view of Proposition \ref{rhoQ} we can thus state the following.
\begin{theorem}
\label{supCrho}
If $\mathcal{X}$ is aperiodic then
\[
0 \leq \limsup_{n\to\infty} C_n(\psi) \leq
\lim_{n\to\infty}\frac{\rho_n(-\eta)}{\rho_n(\eta)} < 1.
\]
\end{theorem}

It has recently been shown in \cite[Lemma 2.1]{D18a} that
\begin{equation}
\label{RQ}
\sum_{j=0}^\infty \frac{1}{p_j\pi_j} \sum_{k=0}^j r_k\pi_k =\infty
~~\llr~~ \lim_{n\to\infty} (-1)^nQ_n(-1) = \infty,
\end{equation}
while it follows from \cite[Corollary 3.2 and Lemma 3.3]{D18a} that
\begin{equation}
\label{QQ}
\lim_{n\to\infty} |Q_n(-1)| = \infty
~~\Ra~~ \lim_{n\to\infty} \left|\frac{Q_n(\eta)}{Q_n(-\eta)}\right| = 0.
\end{equation}
Hence, by Proposition \ref{rhoQ},
\begin{equation}
\label{Rrho}
\sum_{j=0}^\infty \frac{1}{p_j\pi_j} \sum_{k=0}^j r_k\pi_k =\infty
~~\Ra~~ \lim_{n\to\infty} \frac{\rho_n(-\eta)}{\rho_n(\eta)}= 0,
\end{equation}
which, in view of Theorem \ref{supCrho}, gives us a sufficient condition for
the SRLP directly in terms of the parameters of the process.
The condition is not necessary since \cite[Example 4.1]{D18a} provides a
counterexample to the reverse implication in \eqref{QQ}.

\subsection{Asymptotic aperiodicity}
\label{AA}

A discrete-time Markov chain on $\mathcal{N}$ may, in the long run, evolve
cyclically through a number of sets constituting a partition of $\mathcal{N}$.
The maximum number of sets involved in this cyclic behaviour is called the
{\em asymptotic period\/} of the chain, and the chain is said to be
{\em asymptotically aperiodic\/} if such cyclic behaviour does not occur, in
which case we also say that the asymptotic period equals one. The {\em
asymptotic period\/} of a Markov chain may be larger than its {\em period\/}.
For rigorous definitions and developments we refer to \cite{D18b}, where it is
also shown that in the specific setting of a birth-death process
the asymptotic period equals either one, or two, or infinity. Precise conditions
for these values to prevail are given as well. In particular, \cite[Theorem 12]{D18b}
tells us the following.
\begin{theorem}
\label{AR}
The process $\mathcal{X}$ is asymptotically aperiodic if and only if 
\begin{equation}
\label{ARsum}
\sum_{j=0}^\infty \frac{1}{p_j\pi_j} \sum_{k=0}^j r_k\pi_k =\infty.
\end{equation}
\end{theorem}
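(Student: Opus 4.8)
The statement is quoted from \cite[Theorem 12]{D18b}; here is the route I would follow to prove it. The plan is to express the asymptotic period through the spectral representation \eqref{repP} and the $R$-theory of the chain at its convergence parameter $R=\eta^{-1}$, and then to convert the resulting analytic condition into the stated series by means of the identity \eqref{RQ}, which already equates divergence of $\sum_j (p_j\pi_j)^{-1}\sum_{k\le j}r_k\pi_k$ with $(-1)^nQ_n(-1)\to\infty$.

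First I would pin down the asymptotic period operationally. For a birth-death chain the only possible cyclic partition of $\mathcal{N}$ is into even and odd states, so the asymptotic period is $1$, $2$, or --- when no genuine recurrent cyclic structure exists at the level $R=\eta^{-1}$ --- $\infty$. Writing, for $n\ge 1$,
\begin{equation*}
\eta^{-n}P_{00}(n)=\int_{(0,\eta]}\Big(\tfrac{x}{\eta}\Big)^n\psi(dx)+(-1)^n\int_{[-\eta,0)}\Big(\tfrac{-x}{\eta}\Big)^n\psi(dx),
\end{equation*}
one sees that a period-two rhythm can only be produced by the sign-alternating second term, whose weight relative to the first is exactly $C_n(\psi)$. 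The decisive questions are therefore whether the mass of $\psi$ at the left endpoint $-\eta$ survives, and whether the chain is $R$-recurrent, i.e.\ whether $\sum_n\eta^{-n}P_{00}(n)=\infty$.

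Second I would translate the endpoint behaviour into polynomial growth. If the series diverges then, by \eqref{RQ}, $(-1)^nQ_n(-1)\to\infty$, hence $|Q_n(-1)|\to\infty$, so by \eqref{QQ} $|Q_n(\eta)/Q_n(-\eta)|\to 0$; Proposition \ref{rhoQ} together with \eqref{ST} then forces $\rho_n(-\eta)\to 0$, that is $\psi(\{-\eta\})=0$, so the alternating term cannot sustain a period-two limit and the asymptotic period is one. For the converse I would argue, again through \eqref{RQ}, that convergence of the series keeps $(-1)^nQ_n(-1)$ bounded, which preserves enough mass near $-\eta$ for the chain either to cycle with period two or to fail to be $R$-recurrent, i.e.\ to have asymptotic period two or $\infty$.

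The hard part is making the correspondence between the \emph{probabilistic} asymptotic period and the \emph{analytic} growth of $Q_n$ exact, across all three regimes $1,2,\infty$. This demands a careful $R$-recurrence analysis at $R=\eta^{-1}$ together with Abelian/Tauberian control of the endpoint masses, so as to match asymptotic period two precisely with the failure of \eqref{ARsum} while separating it from the $R$-null/$R$-transient case that yields asymptotic period $\infty$; these estimates form the technical core of \cite{D18b}.
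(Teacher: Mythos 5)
The paper offers no proof of Theorem \ref{AR} at all: it is imported verbatim from \cite[Theorem 12]{D18b}, where the asymptotic period is rigorously defined and the series criterion is established. Your proposal, by its own admission, ends up in the same place --- your closing paragraph defers ``the technical core'' to \cite{D18b} --- so as a proof it is incomplete on its own terms. That by itself would be a fair reading of the paper's intent, but the route you sketch also contains a step that is not merely unproven but false.

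The problem is the last link in your forward direction. From divergence of \eqref{ARsum} you correctly get $(-1)^nQ_n(-1)\to\infty$ by \eqref{RQ}, then $|Q_n(\eta)/Q_n(-\eta)|\to 0$ by \eqref{QQ}, then $\rho_n(-\eta)/\rho_n(\eta)\to 0$ by Proposition \ref{rhoQ}, and hence $\psi(\{-\eta\})=0$ (since $\rho_n(\eta)\leq\rho_1(\eta)=1$, the vanishing ratio forces $\rho_n(-\eta)\to 0$, and \eqref{ST} applies). But your conclusion ``so the alternating term cannot sustain a period-two limit and the asymptotic period is one'' is a non sequitur, and the implication it asserts is false. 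By \eqref{tXrho}, the condition $\rho_n(-\eta)/\rho_n(\eta)\to 0$ characterizes asymptotic aperiodicity of the \emph{normalized} process $\mathcal{\tilde{X}}$, and the paper emphasizes after \eqref{XtX} that this does \emph{not} imply asymptotic aperiodicity of $\mathcal{X}$: the process of \cite[Example 4.1]{D18a} has $|Q_n(\eta)/Q_n(-\eta)|\to 0$ (hence $\psi(\{-\eta\})=0$ by the argument above) while \eqref{ARsum} fails, so by Theorem \ref{AR} it is \emph{not} asymptotically aperiodic. If your step were valid, that process would be asymptotically aperiodic, contradicting the very theorem you are proving. The underlying error is conceptual: for a transient chain the asymptotic period is a tail property of the sample paths --- which sets of states the process cycles through far out in $\mathcal{N}$ --- and it is not determined by endpoint masses of $\psi$ or by any spectral quantity that is invariant under the normalization \eqref{tbd}; this is exactly why the criterion must be the series \eqref{ARsum} rather than a condition on $\psi$ at $-\eta$. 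The same confusion sinks your converse: ``convergence of the series preserves enough mass near $-\eta$'' is wrong, since an aperiodic process with convergent series still satisfies $\int_{[-\eta,\eta]}(\eta+x)^{-1}\psi(dx)<\infty$ by \eqref{whit}, so no mass survives at $-\eta$; the obstruction to asymptotic aperiodicity there is not spectral mass at the left endpoint.
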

Note that \eqref{ARsum} is precisely the sufficient condition for prevalence of
the SRLP derived in the previous subsection.

Letting
\begin{equation}
\label{Ln}
L_n := \sum_{j=0}^n \frac{1}{p_j\pi_j}, \quad 0\leq n\leq\infty,
\end{equation}
it follows from Theorem \ref{AR} that
\begin{equation}
\label{recAA}
\mathcal{X} ~~\mbox{is~aperiodic~and~} L_\infty=\infty
~~\Ra~~ \mathcal{X} \mbox{~is~asymptotically~aperiodic}.
\end{equation}
So, recalling from \cite{KM59} that
\begin{equation}
\label{classification}
\mathcal{X} ~\mbox{is}~
\left\{
\begin{array}{l@{}l}
\mbox{recurrent} &~~\llr~~ L_\infty = \infty\\
\mbox{transient} &~~\llr~~ L_\infty < \infty,
\end{array}
\right.
\end{equation}
and noting the obvious fact that asymptotic aperiodicity implies aperiodicity,
we conclude that for a recurrent process {\em aperiodicity\/} and
{\em asymptotic aperiodicity\/} are equivalent.  The study of asymptotic
aperiodicity is therefore relevant in particular for transient processes.

Another sufficient condition for asymptotic aperiodicity is obtained by
observing that
\begin{equation}
\label{ineqr}
\sum_{j=0}^n \frac{1}{p_j\pi_j} \sum_{k=0}^j r_k\pi_k 
\geq \sum_{j=0}^n \frac{r_j}{p_j},
\end{equation}
so that
\begin{equation}
\label{rjpj}
\sum_{j=0}^\infty\frac{r_j}{p_j} = \infty ~~\Ra~~ \mathcal{X}
\mbox{~is~asymptotically~aperiodic}.
\end{equation}

Now turning to the normalized version $\mathcal{\tilde{X}}$ of $\mathcal{X}$
we observe from the analogues for $\mathcal{\tilde{X}}$ of \eqref{RQ} and
Theorem \ref{AR} that
\[
\mathcal{\tilde{X}} \mbox{~is~asymptotically~aperiodic~} 
~~\llr~~ \lim_{n\to\infty} (-1)^n\tQ_n(-1) = \infty,
\]
which, by \eqref{tQ} and Proposition \ref{rhoQ}, may be formulated as
\begin{equation}
\label{tXrho}
\mathcal{\tilde{X}} \mbox{~is~asymptotically~aperiodic~} 
~~\llr~~ \lim_{n\to\infty} \frac{\rho_n(-\eta)}{\rho_n(\eta)}= 0.
\end{equation}
With Theorem \ref{AR} it now follows that \eqref{Rrho} may be translated into
\begin{equation}
\label{XtX}
\mathcal{X} \mbox{~is~asymptotically~aperiodic~} ~~\Ra~~
\mathcal{\tilde{X}} \mbox{~is~asymptotically~aperiodic}, 
\end{equation}
but we emphasize again that the reverse implication is not valid.

\section{Conjecture}
\label{con}

In view of \eqref{Rrho} and the Theorems \ref{srlpth}, \ref{supCrho}
and \ref{AR} the birth-death process $\mathcal{X}$ has the SRLP if it is
asymptotically aperiodic. But, bearing in mind that the reverse implication in
\eqref{Rrho} does not hold, the two properties are definitely {\em not\/} equivalent.
However, if, instead of $\mathcal{X}$, we consider the {\em normalized\/}
process $\mathcal{\tilde{X}}$, then $|\tQ_n(\tilde{\eta})/\tQ_n(-\tilde{\eta})|
=|1/\tQ_n(-1)|$, so that the reverse implication in \eqref{QQ} -- and hence in
\eqref{Rrho} -- is trivially true. In all cases known to us a normalized process
having the SRLP is asymptotically aperiodic, so we conjecture that
$\mathcal{\tilde{X}}$ is in fact asymptotically aperiodic if it has the SRLP, which,
by Theorem \ref{srlpth}, \eqref{tC} and \eqref{tXrho}, amounts to the following.

\begin{conjecture}
\label{c1}
We have
\begin{equation}
\label{con1}
\lim_{n\to\infty} C_n(\psi) = 0 ~~\Ra~~
\lim_{n\to\infty} \frac{\rho_n(-\eta)}{\rho_n(\eta)}= 0.
\end{equation}
\end{conjecture}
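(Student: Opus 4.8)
The plan is to reduce to the normalized situation and then to aim not merely for Conjecture~\ref{c1} but for the stronger assertion \eqref{conj2}, by establishing the reverse of the inequality in Lemma~\ref{supC}. First I would invoke \eqref{tC} and \eqref{trho} to assume without loss of generality that $\eta=1$, so that $Q_n(\eta)=1$ and, by Proposition~\ref{rhoQ}, $\lim_n \rho_n(-\eta)/\rho_n(\eta)=\lim_n Q_n^2(\eta)/Q_n^2(-\eta)=\lim_n |Q_n(-\eta)|^{-2}=:\ell$. In these terms the conclusion of Conjecture~\ref{c1} reads $|Q_n(-\eta)|\to\infty$, and the periodic case is vacuous since there $C_n(\psi)\equiv 1$. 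Thus it suffices to treat the aperiodic case, where $\psi(\{-\eta\})=0$ by \eqref{whit}; note also that if $\psi(\{\eta\})>0$ then $\rho_n(\eta)$ tends to a positive limit while $\rho_n(-\eta)\to 0$, so both sides are trivially consistent and the substance lies in the case $\psi(\{\eta\})=0$.

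The key step I would target is the lower bound
\[
\liminf_{n\to\infty} C_n(\psi)\geq \ell,
\]
since together with Lemma~\ref{supC} this forces $\lim_n C_n(\psi)$ to exist and equal $\ell$, giving \eqref{conj2} and a fortiori Conjecture~\ref{c1}. To attack it I would run the same subsequential machinery as in the proof of Lemma~\ref{supC}: along a subsequence on which $C_n\to c$ the normalized measures $x^n\psi(dx)/\int x^n\psi(dx)$ concentrate at $\pm\eta$ and, for even $n$, converge weakly to $(1+c)^{-1}(\delta_\eta+c\,\delta_{-\eta})$, because $\eta=\sup\operatorname{supp}(\psi)$ guarantees $\int_{(0,\eta-\delta]}x^n\psi\big/\int_{(0,\eta]}x^n\psi\to 0$ for every $\delta>0$, and likewise at $-\eta$. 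Whereas in Lemma~\ref{supC} the nonnegativity of $L_n(Q_jQ_{j+1},\psi)$ and $L_n(Q_j^2,\psi)$ produced only the upper bound $c\leq\ell$, here I would instead pursue matched asymptotics: via a Tauberian argument I would tie the decay rate of $A_n:=\int_{(0,\eta]}x^n\psi(dx)$ to the boundary behaviour of $\psi$ at $+\eta$ and that of $B_n:=\int_{[-\eta,0)}(-x)^n\psi(dx)$ to the behaviour at $-\eta$, and then tie those two boundary rates to the growth of $Q_n(-\eta)$ through the Christoffel-function asymptotics \eqref{ST} and their refinements. Comparing the resulting rates for $C_n=B_n/A_n$ and for $|Q_n(-\eta)|^{-2}$ is what should deliver $\liminf_n C_n(\psi)\geq\ell$.

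The main obstacle is precisely this lower bound. The positivity of the transition probabilities $P_{ij}(n)$ is inherently one-sided information and yields, as in Lemma~\ref{supC}, only upper bounds on the subsequential limits of $C_n(\psi)$; manufacturing a matching lower bound requires two-sided, quantitative control of the relative endpoint masses $A_n$ and $B_n$ and of their linkage to the growth of $Q_n(-\eta)$. In full generality this control can fail, since $\psi([\eta-\epsilon,\eta])$ and $\psi([-\eta,-\eta+\epsilon])$ may oscillate or decay at incomparable rates, so that $\lim_n C_n(\psi)$ need not even exist and the two sides of \eqref{conj2} can drift apart along different subsequences. This is why I would, as the paper anticipates, impose mild regularity on the endpoint behaviour of $\psi$ (for instance regular variation of these endpoint masses, or the a priori existence of $\lim_n C_n(\psi)$); under such hypotheses the two rates become comparable and the Tauberian comparison closes, yielding \eqref{conj2} and hence the equivalence of the strong ratio limit property and asymptotic aperiodicity for the normalized process. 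A fully unconditional proof of the lower bound would resolve the conjecture outright, and I expect that to be the genuinely hard point.
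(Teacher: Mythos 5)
Your proposal is a strategy outline rather than a proof, and its skeleton does match the paper's route: the paper also does not prove Conjecture \ref{c1} unconditionally, but verifies the stronger \eqref{conj2} under regularity hypotheses (Theorem \ref{main}), after exactly the reductions you make (normalization via \eqref{tC} and \eqref{trho}, dismissal of the periodic case, and dismissal of $\psi(\{\eta\})>0$, which falls under the second condition in \eqref{ass2} via \eqref{Thm3.2}). The genuine gap is that the step you leave as a placeholder --- tying the endpoint behaviour of $\psi$ to the decay rate of $\rho_n(\pm\eta)$ ``through the Christoffel-function asymptotics \eqref{ST} and their refinements'' --- is the entire technical content. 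In the only substantive case (aperiodic, $\psi(\{\eta\})=\psi(\{-\eta\})=0$), \eqref{ST} says merely that $\rho_n(\pm\eta)\to 0$ and gives no rate whatsoever, so the ratio is a $0/0$ indeterminate to which \eqref{ST} contributes nothing. The ``refinement'' the paper actually uses is Theorem 1.2 of Danka and Totik \cite{DT18}, which yields $n^{2\al+2}\rho_n(\eta)\to(2\eta)^{-\al-1}w(\eta-)\Ga(\al+1)\Ga(\al+2)$ and its analogue at $-\eta$ with exponent $\be$; this requires both power-type behaviour of the density at the endpoints and global regularity of $\psi$ in the Ullman--Stahl--Totik sense. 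Neither is available for free: the paper manufactures them from the coefficient hypothesis $\sum_j|p_jq_{j+1}-p_{j-1}q_j|<\infty$ via the M\'at\'e--Nevai theorem \cite{MN83} (absolute continuity and positivity of $\Psi'$ on the open interval) and from \eqref{gamma} with $p_nq_{n+1}\to\frac14$ (regularity of $\psi$). Your plan identifies none of these ingredients, and no elementary Tauberian argument is known to replace them; indeed the paper's mechanism is not your subsequential lower bound $\liminf_n C_n(\psi)\ge\ell$ at all, but two independent explicit computations (Theorems \ref{Cw} and \ref{rhow}) whose answers happen to coincide, with the matching lower bound emerging only a posteriori.

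A second, concrete error is your fallback hypothesis: you assert that assuming ``the a priori existence of $\lim_n C_n(\psi)$'' would make the two rates comparable and close the argument. That assumption plus your desired conclusion is precisely Conjecture \ref{c2}, which the paper leaves open in general; existence of the limit of $C_n(\psi)$ is not known to imply any comparability of the endpoint masses $\psi([\eta-\eps,\eta])$ and $\psi([-\eta,-\eta+\eps])$, let alone control of $\rho_n(\pm\eta)$. The regularity the paper genuinely needs is of a different kind --- conditions $(a)$--$(d)$ of Theorem \ref{main}(iii), i.e.\ bounded variation of the recurrence coefficients together with power-type endpoint densities --- and replacing them by mere existence of $\lim_n C_n(\psi)$ is an overclaim, not a mild variant.
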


Recall that, by Proposition \ref{rhoQ}, the limit on the right-hand side exists,
and that, by Theorem \ref{supCrho}, the right-hand side of \eqref{con1}
implies the left-hand side. Note also that \eqref{con1} is equivalent
to the conjecture already put forward in \cite{DS95b}. Actually, as announced in
the introduction, we venture to state the following, stronger conjecture.
\begin{conjecture}
\label{c2}
If $C_n(\psi)$ tends to a limit as $n\to\infty$, then
\begin{equation}
\label{con2}
\lim_{n\to\infty}
C_n(\psi) = \lim_{n\to\infty} \frac{\rho_n(-\eta)}{\rho_n(\eta)}.
\end{equation}
\end{conjecture}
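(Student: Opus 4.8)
The plan is to prove the two inequalities $\lim_{n\to\infty}C_n(\psi)\le R$ and $\lim_{n\to\infty}C_n(\psi)\ge R$ separately, where $R:=\lim_{n\to\infty}\rho_n(-\eta)/\rho_n(\eta)$ exists by Proposition \ref{rhoQ}. First I would normalize: by \eqref{trho} and \eqref{tC} neither side of \eqref{con2} is affected by normalization, so I may assume $\eta=1$, whence $Q_n(\eta)=1$ for all $n$ and, by Proposition \ref{rhoQ}, $R=\lim_{n\to\infty}Q_n^2(\eta)/Q_n^2(-\eta)$. If $\mathcal{X}$ is periodic then $C_n(\psi)\equiv 1$ and $\rho_n(-\eta)/\rho_n(\eta)\equiv 1$, so \eqref{con2} is trivial; hence I assume throughout that $\mathcal{X}$ is aperiodic.

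The inequality $\lim_{n\to\infty}C_n(\psi)\le R$ is essentially free. Since the limit on the left of \eqref{con2} is assumed to exist it equals $\limsup_{n\to\infty}C_n(\psi)$, and Theorem \ref{supCrho} (equivalently, Lemma \ref{supC} combined with Proposition \ref{rhoQ}) gives $\limsup_{n\to\infty}C_n(\psi)\le R$. The entire content of the conjecture is therefore the reverse inequality $\liminf_{n\to\infty}C_n(\psi)\ge R$.

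For the lower bound I would first settle the cases controlled by endpoint atoms. Aperiodicity forces $\psi(\{-\eta\})=0$ by \eqref{whit}. If moreover $\psi(\{\eta\})>0$, then Shohat--Tamarkin \eqref{ST} yields $\rho_n(-\eta)\to 0$ while $\rho_n(\eta)\to\psi(\{\eta\})>0$, so $R=0$; and, since the numerator $\int_{[-1,0)}(-x)^n\psi(dx)$ is $o(1)$ relative to $\int_{(0,1]}x^n\psi(dx)\to\psi(\{\eta\})$ (dominated convergence, there being no atom at $-\eta$), also $\lim_{n\to\infty}C_n(\psi)=0=R$. Thus equality holds whenever $\psi$ has an atom at the top of its support.

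The hard case, and the point I expect to be the main obstacle, is $\psi(\{\eta\})=0$: then $\rho_n(\pm\eta)\to 0$, $R$ is the genuinely asymptotic ratio $\lim_{n\to\infty}Q_n^2(\eta)/Q_n^2(-\eta)$ of Proposition \ref{rhoQ}, and both integrals defining $C_n(\psi)$ are $o(\eta^n)$, so the value of $\lim_{n\to\infty}C_n(\psi)$ is dictated by the fine behaviour of $\psi$ at the two edges $\pm\eta$. Here I would work with the probability measures $\mu_n:=x^n\psi/\int x^n\psi$ (for even $n$), which by the concentration estimates \cite[Lemmas 3.1 and 3.2]{DS95b} converge weakly to $(1+c)^{-1}\delta_\eta+c(1+c)^{-1}\delta_{-\eta}$ with $c=\lim_{n\to\infty}C_n(\psi)$, and attempt to bound $c$ from below by testing against the reproducing kernels $\sum_{j<N}p_j^2$ and exploiting the Christoffel extremal property $\rho_N(-\eta)=\min\{\int P^2\psi : \deg P<N,\ P(-\eta)=1\}$, so as to force the mass $\mu_n$ places near $-\eta$ to be at least of order $Q_n^2(\eta)/Q_n^2(-\eta)$. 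The difficulty is structural: the nonnegativity of the transition probabilities $P_{ij}(n)$ (which drives the proof of Lemma \ref{supC}) only ever produces \emph{upper} bounds on $c$, whereas the reverse inequality requires a lower bound on the edge mass near $-\eta$ relative to that near $\eta$, uniform in $n$. This is precisely where the two edge-asymptotics can fail to match without extra control on $\psi$, so I expect $\liminf_{n\to\infty}C_n(\psi)\ge R$ cannot be had unconditionally and that the argument must be completed under the mild regularity hypotheses promised for Section \ref{res}, comparing the separate asymptotic expansions of $C_n(\psi)$ and of $Q_n^2(\eta)/Q_n^2(-\eta)$ to be collected in Sections \ref{Cnpsi} and \ref{ratio}.
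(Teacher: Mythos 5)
Your partial results are correct, and they track the paper's own easy observations almost exactly: the reduction to the lower bound $\liminf_n C_n(\psi)\geq\lim_n\rho_n(-\eta)/\rho_n(\eta)$ via Theorem \ref{supCrho}, and the endpoint-atom case, which is precisely the remark following Corollary \ref{epscor} (aperiodicity plus $\psi(\{\eta\})>0$ forces both limits to vanish, by \eqref{whit}, \eqref{ST} and dominated convergence). Your structural diagnosis is also accurate: positivity of the $P_{ij}(n)$ only ever produces \emph{upper} bounds on $\lim_n C_n(\psi)$, so the lower bound is the whole problem. But you must be aware of the status of the statement: it is a \emph{conjecture}, and the paper does not prove it unconditionally either; what the paper actually does (Theorem \ref{main}) is verify it under regularity hypotheses. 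The gap in your proposal is therefore that it stops exactly where the paper's real work begins: you announce that one should ``compare the separate asymptotic expansions'' but never carry out that comparison. Note also that your case split is coarser than the paper's: within your ``hard case'' $\psi(\{\eta\})=0$ there is still an easy subcase, namely when \eqref{ass2} holds (asymptotic aperiodicity of $\mathcal{\tilde{X}}$, e.g.\ $\tL_\infty=\infty$ without an atom at $\eta$), where \eqref{Rrho}, \eqref{tLrho} and Theorem \ref{supCrho} force both limits to be $0$; the genuinely hard case is the failure of \eqref{ass2}.

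In that remaining case the paper's route is as follows, and none of it appears in your sketch. The bounded-variation hypothesis \eqref{ass1} together with $r_n\to 0$ and Lemma \ref{blum} (Blumenthal's theorem) pins down supp$(\psi)=[-\eta,\eta]$ with $\ze=-\eta$; the M\'at\'e--Nevai theorem then gives that $\Psi$ is continuously differentiable with $\Psi'>0$ on $(-\eta,\eta)$, and \eqref{gamma} gives regularity in the Stahl--Totik sense. Under the endpoint hypotheses (b)--(d) of Theorem \ref{main}, two \emph{independent} asymptotic computations are made: Theorem \ref{Cw} evaluates $\lim_n C_n(\psi)$ by l'H\^opital applied to the edge densities, with exponents $\al,\be$ as in \eqref{albe}, while Theorem \ref{rhow} evaluates $\lim_n \rho_n(-\eta)/\rho_n(\eta)$ from the Danka--Totik asymptotics $n^{2\al+2}\rho_n(\eta)\to(2\eta)^{-\al-1}w(\eta-)\Ga(\al+1)\Ga(\al+2)$ and its mirror image at $-\eta$. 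Both limits come out equal to $0$ when $\al<\be$ and to $w(-\eta+)/w(\eta-)$ when $\al=\be$, which is the verification of \eqref{con2}. Your proposed kernel/extremal-property argument for a direct lower bound on $C_n(\psi)$ is not what the paper does, and nothing in the paper suggests it can be pushed through without endpoint regularity; if you want to turn your sketch into a proof, the missing ingredient is exactly this pair of matching endpoint asymptotics.
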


In what follows we will verify Conjecture 2 -- and hence Conjecture 1 -- under
some mild regularity conditions. But before drawing our conclusions in Section
\ref{res}, we collect some asymptotic properties of $C_n(\psi)$ in the next
section and study the asymptotic behaviour of the ratio $\rho_n(-\eta)/\rho_n(\eta)$
in Section \ref{ratio}. 

\section{Asymptotic results for $C_n(\psi)$}
\label{Cnpsi}

By definition of $C_n(\psi)$ we obviously have $C_n(\psi)=0$ for all $n$ if
$\ze\geq 0$. Moreover, if $-\eta<\ze<0$ then, for $0<\eps<\eta+\ze$,
\[
C_n(\psi) = \frac{\int_{[\ze,0)}(-x)^n\psi(dx)}{\int_{(0,\eta]} x^n\psi(dx)}
\leq \frac{(-\ze)^n}{(\eta-\eps)^n\psi([\eta-\eps,\eta])} \to 0 ~~\mbox{as}~~n\to\infty.
\]
Finally, if $\ze=-\eta$ we have, for $0<\eps<\eta$,
\[
\frac{\int_{(0,\eta-\eps]}x^n\psi(dx)}{\int_{[\eta-\eps,\eta]} x^n\psi(dx)}\leq
\frac{(\eta-\eps)^n}{(\eta-\eps/2)^n\psi([\eta-\eps/2,\eta])}\to 0
~~\mbox{as}~~n\to\infty,
\]
while
\[
\frac{\int_{[-\eta+\eps,0)}(-x)^n\psi(dx)}{\int_{[-\eta,-\eta+\eps]}(-x)^n\psi(dx)}\leq
\frac{(\eta-\eps)^n}{(\eta-\eps/2)^n\psi([-\eta,-\eta+\eps/2])}\to 0
~~\mbox{as}~~n\to\infty.
\]
With these results we readily obtain the next proposition, which extends
\cite[Lemma 3.5]{DS95b}.

\begin{proposition}
\label{eps}
If $\ze>-\eta$ then $\ds\lim_{n\to\infty}C_n(\psi)=0$.
If $\ze=-\eta$ then we have for any $\eps\in (0,\eta)$,
\vspace{-.3cm}
\begin{equation}
\label{Cint}
\ds\limsup_{n\to\infty} C_n(\psi) = \ds\limsup_{n\to\infty}
\frac{\int_{[-\eta,-\eta+\eps]}(-x)^n\psi(dx)}{\int_{[\eta-\eps,\eta]} x^n\psi(dx)},
\end{equation}
and a similar result with $\limsup$ replaced by $\liminf$.
\end{proposition}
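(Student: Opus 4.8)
The plan is to read the Proposition off the three displayed estimates that precede its statement, handling the two regimes $\ze>-\eta$ and $\ze=-\eta$ in turn. The case $\ze>-\eta$ requires no new work: if $\ze\ge0$ then $C_n(\psi)=0$ for every $n$ directly from the definition \eqref{Cn}, while if $-\eta<\ze<0$ the first displayed chain of inequalities bounds $C_n(\psi)$ by a constant multiple of $\left((-\ze)/(\eta-\eps)\right)^n\to0$. Since $-\ze<\eta$, the two subcases together give $\ds\lim_{n\to\infty}C_n(\psi)=0$.

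For $\ze=-\eta$ I would fix $\eps\in(0,\eta)$ and split the numerator and denominator of $C_n(\psi)$ at $-\eta+\eps$ and $\eta-\eps$ respectively. Writing
\[
N_1(n):=\int_{[-\eta,-\eta+\eps]}(-x)^n\psi(dx),\qquad N_2(n):=\int_{(-\eta+\eps,0)}(-x)^n\psi(dx),
\]
\[
D_1(n):=\int_{[\eta-\eps,\eta]}x^n\psi(dx),\qquad D_2(n):=\int_{(0,\eta-\eps)}x^n\psi(dx),
\]
we have $C_n(\psi)=(N_1(n)+N_2(n))/(D_1(n)+D_2(n))$. Because $\ze=-\eta=\inf\mbox{supp}(\psi)$ and $\eta=\sup\mbox{supp}(\psi)$, every one-sided neighbourhood of $\pm\eta$ carries positive $\psi$-mass, so bounding $(-x)^n\ge(\eta-\eps/2)^n$ on $[-\eta,-\eta+\eps/2]$ and $x^n\ge(\eta-\eps/2)^n$ on $[\eta-\eps/2,\eta]$ shows $N_1(n)>0$ and $D_1(n)>0$ for every $n$. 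This legitimizes the factorization
\[
C_n(\psi)=\frac{N_1(n)}{D_1(n)}\cdot\frac{1+N_2(n)/N_1(n)}{1+D_2(n)/D_1(n)}.
\]

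The key observation is that the right-hand correction factor tends to $1$: since $N_2(n)\le\int_{[-\eta+\eps,0)}(-x)^n\psi(dx)$ the second displayed estimate gives $N_2(n)/N_1(n)\to0$, and since $D_2(n)\le\int_{(0,\eta-\eps]}x^n\psi(dx)$ the first gives $D_2(n)/D_1(n)\to0$. I would then finish with the elementary rule that if $a_n\ge0$ and $b_n\to1$ then $\limsup_n a_nb_n=\limsup_n a_n$ and $\liminf_n a_nb_n=\liminf_n a_n$; applied with $a_n=N_1(n)/D_1(n)$ and $b_n$ the correction factor, this is exactly \eqref{Cint} together with its $\liminf$ analogue. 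The only steps needing care are bookkeeping rather than analysis: confirming the positivity of $N_1(n)$ and $D_1(n)$ so the factorization is valid, matching the half-open versus closed interval endpoints of my splitting to those of the two pre-stated ratio estimates, and invoking the $\limsup$/$\liminf$ multiplication rule in its $b_n\to1$ form, which avoids any need to bound $a_n$. No analytic input beyond the three displayed estimates is required.
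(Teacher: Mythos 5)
Your proof is correct and takes essentially the same route as the paper: the paper's own ``proof'' consists precisely of the three displayed estimates followed by the remark that the proposition is readily obtained, and your splitting into $N_1,N_2,D_1,D_2$ with the $b_n\to 1$ multiplication rule for $\limsup$/$\liminf$ is exactly the bookkeeping that remark leaves implicit. The positivity of $N_1(n)$ and $D_1(n)$ (from $\ze=-\eta$ and $\eta$ being endpoints of the support) that you check is the only point needing care, and you handle it correctly.
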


As an aside we note that the first statement of this proposition follows also from
Theorem \ref{ze>-eta} in the next section and Theorem \ref{supCrho}.

\begin{corollary}
\label{epscor}
Let $0<\eps<\eta$. Then $\lim_{n\to\infty} C_n(\psi)$ exists if and only if the ratio of
integrals in \eqref{Cint} tends to a limit as $n\to\infty$, in which case the two limits
are equal.
\end{corollary}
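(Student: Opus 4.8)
The plan is to read the corollary purely as a convergence criterion and reduce it to the elementary fact that a real sequence converges precisely when its $\limsup$ and $\liminf$ agree. Write $R_n$ for the ratio of integrals on the right-hand side of \eqref{Cint}. Everything will follow once I have the two identities
\[
\limsup_{n\to\infty} C_n(\psi) = \limsup_{n\to\infty} R_n, \qquad
\liminf_{n\to\infty} C_n(\psi) = \liminf_{n\to\infty} R_n .
\]
Granting these, $\lim_n C_n(\psi)$ exists iff $\limsup_n C_n(\psi) = \liminf_n C_n(\psi)$, iff $\limsup_n R_n = \liminf_n R_n$, iff $\lim_n R_n$ exists; and in that case the common values coincide, since then $\lim_n C_n(\psi) = \limsup_n C_n(\psi) = \limsup_n R_n = \lim_n R_n$. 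So the whole content of the corollary is packaged into those two identities.

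For the case $\ze = -\eta$ the two identities are exactly Proposition \ref{eps}, whose displayed statement gives the $\limsup$ version and whose closing clause supplies the $\liminf$ version; no further work is needed there. The remaining case $\ze > -\eta$ is where the general statement formally reaches beyond Proposition \ref{eps}, since there the proposition only records $\lim_n C_n(\psi) = 0$ and says nothing about $R_n$. Here I would check directly that $\lim_n R_n = 0$, so that both sides of the two identities are again $0$. If $\eps \le \eta + \ze$, then $[-\eta,-\eta+\eps]$ lies strictly below $\ze = \inf\mbox{supp}(\psi)$, so its $\psi$-measure vanishes and $R_n = 0$ for every $n$. If $\eps > \eta + \ze$ (which forces $\ze < 0$), then on the support the numerator's integrand $(-x)^n$ is bounded by $(-\ze)^n$ with $0 < -\ze < \eta$, while the denominator is at least $(\eta-\eps')^n\,\psi([\eta-\eps',\eta])$ for a small $\eps' \in (0,\eta+\ze)$, whence $\eta - \eps' > -\ze$; positivity of this last measure follows from $\eta \in \mbox{supp}(\psi)$. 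This is the same kind of estimate used in the lines preceding Proposition \ref{eps}, and it forces $R_n \to 0$.

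The argument carries essentially no risk, being a repackaging of Proposition \ref{eps}: the only point demanding attention is the $\ze > -\eta$ case, where $R_n$ must be shown to vanish on its own rather than inheriting this from the proposition, and where the size of $\eps$ relative to $\eta+\ze$ decides which of the two sub-estimates applies. Once that routine bound is in place, the $\limsup$/$\liminf$ bookkeeping closes the proof.
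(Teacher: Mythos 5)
Your proof is correct and takes essentially the same route the paper intends: the corollary is stated there as an immediate consequence of Proposition \ref{eps}, namely the $\limsup$/$\liminf$ identities when $\ze=-\eta$ combined with the observation that both $C_n(\psi)$ and the ratio $R_n$ in \eqref{Cint} tend to $0$ when $\ze>-\eta$, the latter by the same elementary estimate displayed before the proposition. One boundary slip worth noting: when $\eps=\eta+\ze$ the interval $[-\eta,-\eta+\eps]$ is not strictly below $\ze$ but contains it, so $R_n$ need not vanish identically if $\psi(\{\ze\})>0$; this is harmless, since your second estimate (which uses only $0<-\ze<\eta$ and $\psi([\eta-\eps',\eta])>0$) applies verbatim to that case as well.
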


This corollary and \eqref{whit} imply in particular that $\lim_{n\to\infty}
C_n(\psi) = 0$ if $\mathcal{X}$ is aperiodic and $\psi(\{\eta\})>0$. But this result
is encompassed in the next proposition.

\begin{proposition}
We have
\begin{equation}
\label{boundsC}
\begin{array}{l}
\ds\liminf_{\eps\downarrow 0}\frac{\psi([-\eta,-\eta+\eps])}{\psi([\eta-\eps,\eta])} \leq
\ds\liminf_{n\to\infty} C_n(\psi) \leq\\
\hspace{4.7cm}\ds\limsup_{n\to\infty} C_n(\psi) \leq
\ds\limsup_{\eps\downarrow 0}\frac{\psi([-\eta,-\eta+\eps])}{\psi([\eta-\eps,\eta])}.
\end{array}
\end{equation}
\end{proposition}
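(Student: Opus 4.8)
The plan is to use Proposition \ref{eps} to localize $C_n(\psi)$ at the two endpoints $\pm\eta$, and then to turn the resulting comparison of weighted integrals into a \emph{pointwise} comparison of the distribution functions
$$G(\epsilon):=\psi([-\eta,-\eta+\epsilon]),\qquad F(\epsilon):=\psi([\eta-\epsilon,\eta]),$$
via a layer-cake identity. Recall that $\zeta\ge-\eta$ by \eqref{infsupp}. If $\zeta>-\eta$ the result is immediate: Proposition \ref{eps} gives $\lim_{n\to\infty}C_n(\psi)=0$, while $G(\epsilon)=0$ for every $\epsilon<\eta+\zeta$ (the interval $[-\eta,-\eta+\epsilon]$ then lies to the left of supp$(\psi)$) and $F(\epsilon)>0$ because $\eta\in$ supp$(\psi)$, so all four quantities in \eqref{boundsC} equal $0$. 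I therefore assume $\zeta=-\eta$ for the rest of the argument, so that Proposition \ref{eps} applies and, writing
$$A_n(\epsilon):=\int_{[-\eta,-\eta+\epsilon]}(-x)^n\psi(dx),\qquad B_n(\epsilon):=\int_{[\eta-\epsilon,\eta]}x^n\psi(dx),$$
yields $\limsup_{n\to\infty}C_n(\psi)=\limsup_{n\to\infty}A_n(\epsilon)/B_n(\epsilon)$ and the same with $\liminf$, for every $\epsilon\in(0,\eta)$.

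The key step is the identity, valid for $0<\epsilon<\eta$,
$$A_n(\epsilon)=(\eta-\epsilon)^nG(\epsilon)+n\int_0^\epsilon(\eta-u)^{n-1}G(u)\,du,$$
and its analogue for $B_n(\epsilon)$ with $G$ replaced by $F$. This follows by substituting $s=\eta+x$ (resp.\ $t=\eta-x$) so that $(-x)^n=(\eta-s)^n$, writing $(\eta-s)^n=(\eta-\epsilon)^n+n\int_s^\epsilon(\eta-u)^{n-1}\,du$, and applying Fubini's theorem together with $\psi([-\eta,-\eta+u])=G(u)$. Since the coefficients $(\eta-\epsilon)^n$ and $(\eta-u)^{n-1}$ are nonnegative, a pointwise inequality of the form $G(u)\le M'F(u)$ on $[0,\epsilon]$ transfers at once to $A_n(\epsilon)\le M'B_n(\epsilon)$ for \emph{every} $n$; moreover $B_n(\epsilon)\ge(\eta-\epsilon)^nF(\epsilon)>0$. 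This identity is what makes the argument work: the naive endpoint bounds would contaminate the ratio with a factor like $(\eta/(\eta-\delta))^n$ that diverges, whereas the layer-cake form is homogeneous and lets the comparison of $G$ and $F$ pass through unscathed. I expect this step to be the only real obstacle, the delicate points being the bookkeeping of a possible atom of $\psi$ at $-\eta$ and the open/closed conventions in $G$ and $F$; a check on point masses confirms the identity for arbitrary positive measures.

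For the upper estimate set $M:=\limsup_{\epsilon\downarrow0}G(\epsilon)/F(\epsilon)$ and assume $M<\infty$ (otherwise there is nothing to prove). Given $M'>M$, choose $\epsilon_0$ so that $G(\epsilon)\le M'F(\epsilon)$ for all $\epsilon<\epsilon_0$; then $G(u)\le M'F(u)$ holds throughout $[0,\epsilon]$ whenever $\epsilon<\epsilon_0$, so the identity gives $A_n(\epsilon)/B_n(\epsilon)\le M'$ for all $n$, whence $\limsup_{n\to\infty}C_n(\psi)\le M'$ by Proposition \ref{eps}. Letting $M'\downarrow M$ gives $\limsup_{n\to\infty}C_n(\psi)\le M$. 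The lower estimate is entirely symmetric: with $m:=\liminf_{\epsilon\downarrow0}G(\epsilon)/F(\epsilon)$ and any $m'<m$ one obtains $A_n(\epsilon)/B_n(\epsilon)\ge m'$ for all $n$ and all small $\epsilon$, hence $\liminf_{n\to\infty}C_n(\psi)\ge m$. Combining these two bounds with the trivial $\liminf_{n\to\infty}C_n(\psi)\le\limsup_{n\to\infty}C_n(\psi)$ establishes the full chain \eqref{boundsC}.
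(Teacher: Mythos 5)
Your proof is correct and follows essentially the same route as the paper's: localize via Proposition \ref{eps}, rewrite the two endpoint integrals as integrals of the distribution functions $G(u)=\psi([-\eta,-\eta+u])$ and $F(u)=\psi([\eta-u,\eta])$ against the positive kernel $n(\eta-u)^{n-1}\,du$ plus a boundary term (your Fubini/layer-cake identity is exactly the paper's Stieltjes integration by parts), and then pass the pointwise comparison $G\le M'F$ near $0$ through this representation. The only difference is cosmetic: the paper first disposes separately of the periodic case and the case $\psi(\{\eta\})>0$ before integrating by parts, whereas your identity, valid for arbitrary measures, handles these cases uniformly.
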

\vspace{.0cm}
\begin{proof}\rm 
The result is obviously true if $\psi$ is symmetric about $0$ (that is, if
$\mathcal{X}$ is periodic) or, by Proposition \ref{eps}, if $\ze>-\eta$. Moreover,
if $\mathcal{X}$ is aperiodic, $\ze = -\eta$ and $\psi(\{\eta\}) > 0$ then, by
\eqref{whit} and Corollary \ref{epscor}, all components of the inequalities \eqref{boundsC}
are zero. In the remainder of the proof we will therefore assume that $\mathcal{X}$
is aperiodic, $\ze = -\eta$ and $\psi(\{\eta\}) = \psi(\{-\eta\}) = 0$. Now let $c$ be
such that
\[
c>L:=\limsup_{\eps\downarrow 0}\frac{\psi([-\eta,-\eta+\eps])}{\psi([\eta-\eps,\eta])}.
\]
Then there exists an $\eps$, $0 <\eps < \eta$, such that 
\begin{equation}
\label{c}
\psi([-\eta,-\eta+x])\leq c\psi([\eta-x,\eta]), \quad 0 < x \leq \eps.
\end{equation}
Next defining
\begin{equation}
\label{Psi}
\Psi(x) := \left\{
\begin{array}{l@{}l}
0  &\mbox{if}~~ x < -\eta\\
\psi([-\eta,x]) \qquad &\mbox{if}~ -\eta \leq x \leq \eta\\
1  &\mbox{if}~~ x > \eta.
\end{array}
\right.
\end{equation}
integration by parts of the relevant Stieltjes integrals gives us, for all
$n$,
\[
\begin{array}{l@{}l}
\ds\int_{[\eta-\eps,\eta]}x^n\psi(dx)~  & = \ds\int_{[\eta-\eps,\eta]} x^n d\Psi(x)\\
&\ds= \eta^n-(\eta-\eps)^n\Psi(\eta-\eps-) -n\int_{\eta-\eps}^\eta x^{n-1}\Psi(x)dx \\
&\ds=  n\int_{\eta-\eps}^\eta[1-\Psi(x)]x^{n-1}dx + (\eta-\eps)^n[1-\Psi(\eta-\eps-)]\\
&\ds= n\int_{\eta-\eps}^\eta \psi([x,\eta])x^{n-1}dx + (\eta-\eps)^n\psi([\eta-\eps,\eta]),
\end{array}
\]
while
\[
\begin{array}{l@{}l}
\ds\int_{[-\eta,-\eta+\eps]}(-x)^n\psi(dx)~  & = \ds\int_{[\eta-\eps,\eta]} x^n d(1-\Psi(-x))\\
&\ds=  n\int_{\eta-\eps}^\eta \Psi(-x) x^{n-1}dx + (\eta-\eps)^n \Psi(-\eta+\eps)]\\
&\ds= n\int_{\eta-\eps}^\eta \psi([-\eta,-x])x^{n-1}dx + (\eta-\eps)^n\psi([-\eta,-\eta+\eps])\\
&\ds\leq cn\int_{\eta-\eps}^\eta \psi([x,\eta])x^{n-1}dx + c(\eta-\eps)^n\psi([\eta-\eps,\eta]),
\end{array}
\]
where we have used \eqref{c} in the last step. It follows that
\[
\limsup_{n\to\infty} \frac{\int_{[-\eta,-\eta+\eps]}(-x)^n\psi(dx)}
{\int_{[\eta-\eps,\eta]} x^n\psi(dx)} \leq c,
\]
and since $c$ can be chosen arbitrarily close to $L$, the right-hand inequality in
\eqref{boundsC} follows by Proposition \ref{eps}.
The left-hand inequality is proven similarly.
\end{proof}

In combination with Theorem \ref{supCrho} this proposition leads to the following.

\begin{theorem}
\label{thmC}
If $\mathcal{X}$ is aperiodic we have
\[
\lim_{n\to\infty} C_n(\psi) =
\lim_{\eps\downarrow 0}\frac{\psi([-\eta,-\eta+\eps])}{\psi([\eta-\eps,\eta])}<1,
\]
if the second limit exists.
\end{theorem}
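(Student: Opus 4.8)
The plan is to obtain Theorem~\ref{thmC} as a direct consequence of the double inequality \eqref{boundsC} established in the preceding proposition, supplemented by the strict upper bound of Theorem~\ref{supCrho}. Write $L:=\lim_{\eps\downarrow 0}\psi([-\eta,-\eta+\eps])/\psi([\eta-\eps,\eta])$ for the limit that is assumed to exist. The first observation is that existence of this limit means precisely that $\liminf_{\eps\downarrow 0}$ and $\limsup_{\eps\downarrow 0}$ of the ratio $\psi([-\eta,-\eta+\eps])/\psi([\eta-\eps,\eta])$ coincide, each being equal to $L$.

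Next I would substitute this into \eqref{boundsC}. That chain of inequalities bounds both $\liminf_{n\to\infty}C_n(\psi)$ and $\limsup_{n\to\infty}C_n(\psi)$ between $\liminf_{\eps\downarrow 0}$ and $\limsup_{\eps\downarrow 0}$ of the same ratio, so with both outer terms now equal to $L$ the two inner quantities are squeezed between $L$ and $L$. Hence $\liminf_{n\to\infty}C_n(\psi)=\limsup_{n\to\infty}C_n(\psi)=L$; in particular $\lim_{n\to\infty}C_n(\psi)$ exists and equals $L$, which is the asserted equality.

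It then remains to establish the strict inequality $L<1$. Here I would invoke the standing hypothesis that $\mathcal{X}$ is aperiodic, which is exactly the setting of Theorem~\ref{supCrho}; that theorem yields $\limsup_{n\to\infty}C_n(\psi)<1$. Since we have just shown $\lim_{n\to\infty}C_n(\psi)=L=\limsup_{n\to\infty}C_n(\psi)$, we conclude $L<1$, completing the argument.

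I do not anticipate a genuine obstacle, since the theorem is essentially a formal corollary of results already in hand; the only point needing care is confirming that the hypotheses line up, namely that aperiodicity makes Theorem~\ref{supCrho} available, and that it is precisely the assumed existence of the $\eps$-limit that collapses the two-sided $\eps$-bounds in \eqref{boundsC} to the single value $L$, thereby forcing convergence of $C_n(\psi)$.
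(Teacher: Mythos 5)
Your proposal is correct and follows exactly the paper's route: the paper derives Theorem~\ref{thmC} as an immediate consequence of the proposition containing \eqref{boundsC} (which squeezes $\liminf_n C_n(\psi)$ and $\limsup_n C_n(\psi)$ between the $\eps$-liminf and $\eps$-limsup of the mass ratio) combined with the strict bound $\limsup_n C_n(\psi)<1$ from Theorem~\ref{supCrho} under aperiodicity. Your write-up simply makes explicit the one-line deduction the paper states as ``In combination with Theorem~\ref{supCrho} this proposition leads to the following.''
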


With a view to the analysis in Section \ref{res} we will employ this theorem
to obtain a limit result in a more specific situation. Concretely, we consider the
condition \\
$(i)$~~~ $\Psi$ is continuously differentiable on $\mathbb{R}$ and $\Psi'(x)>0$
for $x\in (-\eta,\eta)$,\\
where $\Psi$ denotes the function defined in \eqref{Psi}.
Note that this condition implies $\Psi(-\eta+) = 0$,
$\Psi(\eta-) = 1$ and also $\Psi'(-\eta+) = \Psi'(\eta-) = 0$.
If condition $(i)$ prevails we let
\begin{equation}
\label{albe}
\begin{array}{l}
\ds\al := \sup\{a: \lim_{x\uparrow\eta}~(\eta-x)^{-a} \Psi'(x) = 0\},\\ 
\ds\be := \sup\{b: \lim_{x\downarrow -\eta}~(\eta+x)^{-b} \Psi'(x) = 0\},
\end{array}
\end{equation}
so that $\al$ and $\be$ are nonnegative (but possibly infinity). A second condition is\\
$(ii)$~~ $\al$ and $\be$ are finite.\\
Finally, if conditions $(i)$ and $(ii)$ prevail we define
\begin{equation}
\label{w}
w(x) := (\eta -x)^{-\al}(\eta +x)^{-\be}\Psi'(x), \quad -\eta < x < \eta,
\end{equation}
so that $w(x)>0$ for $x\in (-\eta,\eta)$. A third condition is\\
$(iii)$~ the limits $w(-\eta+)$ and $w(\eta-)$ exist and are finite, and $w(\eta-)>0$.
\begin{theorem}
\label{Cw}
If $\mathcal{X}$ is aperiodic and  the corresponding measure $\psi$
satisfies the conditions $(i)$, $(ii)$ and $(iii)$ above, then $0 < \al\leq\be$ and
\[
\lim_{n\to\infty} C_n(\psi) =
\left\{
\begin{array}{l@{}l}
0 &\mbox{if}~~ \al<\be\\
\ds\frac{w(-\eta+)}{w(\eta-)} \quad &\mbox{if}~~ \al=\be.
\end{array}
\right.
\]
\end{theorem}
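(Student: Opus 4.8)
The plan is to invoke Theorem \ref{thmC}, which reduces the evaluation of $\ds\lim_{n\to\infty}C_n(\psi)$ to that of the tail‑mass quotient $\psi([-\eta,-\eta+\eps])/\psi([\eta-\eps,\eta])$ as $\eps\downarrow0$. Since condition $(i)$ makes $\Psi$ continuous, $\psi$ has no atoms at $\pm\eta$ (consistent with aperiodicity), so these masses can be written as $\psi([\eta-\eps,\eta])=\int_{\eta-\eps}^{\eta}\Psi'(x)\,dx$ and $\psi([-\eta,-\eta+\eps])=\int_{-\eta}^{-\eta+\eps}\Psi'(x)\,dx$, and into each integrand I would substitute the representation $\Psi'(x)=(\eta-x)^{\al}(\eta+x)^{\be}w(x)$ afforded by \eqref{w}.

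For the denominator, condition $(iii)$ gives $w(\eta-)>0$, so $\Psi'(x)\sim(2\eta)^{\be}w(\eta-)(\eta-x)^{\al}$ as $x\uparrow\eta$; integrating this asymptotic relation yields $\psi([\eta-\eps,\eta])\sim\frac{(2\eta)^{\be}w(\eta-)}{\al+1}\eps^{\al+1}$. The same computation also delivers $\al>0$: were $\al=0$, then $w(\eta-)=(2\eta)^{-\be}\Psi'(\eta-)=0$ because condition $(i)$ forces $\Psi'(\eta-)=0$, contradicting $w(\eta-)>0$. When $w(-\eta+)>0$ the parallel reasoning at the left endpoint gives the clean estimate $\psi([-\eta,-\eta+\eps])\sim\frac{(2\eta)^{\al}w(-\eta+)}{\be+1}\eps^{\be+1}$, and forming the quotient produces
\[
\frac{\psi([-\eta,-\eta+\eps])}{\psi([\eta-\eps,\eta])}\sim\frac{\al+1}{\be+1}\,\frac{w(-\eta+)}{w(\eta-)}\,(2\eta)^{\al-\be}\,\eps^{\be-\al}.
\]
Letting $\eps\downarrow0$ gives the limit $0$ when $\be>\al$, the value $w(-\eta+)/w(\eta-)$ when $\be=\al$, and divergence to $+\infty$ when $\be<\al$. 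The inequalities \eqref{boundsC} together with Theorem \ref{supCrho} force $\ds\liminf_{\eps\downarrow0}\psi([-\eta,-\eta+\eps])/\psi([\eta-\eps,\eta])<1$, so the divergent case cannot occur; this is precisely the assertion $\al\le\be$. In the two convergent cases Theorem \ref{thmC} then identifies $\ds\lim_{n\to\infty}C_n(\psi)$ with the value just read off.

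The remaining sub‑case $w(-\eta+)=0$ is where I expect the main difficulty, since the left‑endpoint asymptotic degenerates. The upper bound is painless: from $w(x)\to0$ one gets $\psi([-\eta,-\eta+\eps])=o(\eps^{\be+1})$, so the quotient tends to $0$ whenever $\al\le\be$, in agreement with the formula (the value $w(-\eta+)/w(\eta-)$ in the case $\al=\be$ being $0$ as well). The real obstacle is to \emph{exclude} $\al>\be$ here, because the bound $o(\eps^{\be+1})$ by itself does not force divergence of the quotient. To close this gap I would use that $\be$ is the \emph{supremum} in \eqref{albe}: since no exponent exceeding $\be$ lies in the defining set, $w$ cannot vanish faster than any positive power of $(\eta+x)$, so $\psi([-\eta,-\eta+\eps])$ is $\eps^{\be+1}$ times a slowly varying factor, which cannot overcome the genuine power gap $\eps^{\be-\al}$ with $\be-\al<0$; the quotient would then still diverge, contradicting $\ds\liminf_{\eps\downarrow0}\psi([-\eta,-\eta+\eps])/\psi([\eta-\eps,\eta])<1$ and again yielding $\al\le\be$. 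The one point needing genuine care is passing from the supremum property of $\be$ (a $\limsup$ statement) to an honest lower bound on $\int_{-\eta}^{-\eta+\eps}\Psi'$, for which the continuity of $\Psi'$ from condition $(i)$ and the existence of the limit $w(-\eta+)$ are the tools I would rely on.
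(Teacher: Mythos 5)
Your proposal follows the same route as the paper's proof: reduce $\lim_{n\to\infty}C_n(\psi)$ to the tail-mass quotient via Theorem \ref{thmC}, insert the representation $\Psi'(x)=(\eta-x)^{\al}(\eta+x)^{\be}w(x)$, extract the endpoint asymptotics (the paper does this step with l'H\^opital's rule rather than by integrating asymptotic relations, which is an immaterial difference), and rule out $\al>\be$ by contradiction with \eqref{boundsC} and Theorem \ref{supCrho}. Your argument for $\al>0$, your treatment of the case $w(-\eta+)>0$ in all three subcases, and of the case $w(-\eta+)=0$ with $\al\le\be$, are correct and agree with the paper.

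However, the sub-case you flagged — excluding $\al>\be$ when $w(-\eta+)=0$ — is a genuine gap, and your proposed patch fails. The supremum property of $\be$ gives only: for each $\delta>0$ there is a sequence $t_j\downarrow 0$ along which $t_j^{-\delta}w(-\eta+t_j)$ stays away from $0$. That is a pointwise statement along a possibly very sparse sequence, and it yields no lower bound on $\Psi(-\eta+\eps)=\int_0^{\eps}(2\eta-t)^{\al}t^{\be}w(-\eta+t)\,dt$. Concretely, $w$ can cling to $t^{K}$ for an arbitrarily large power $K$ outside a sparse collection of spikes of tiny width on which it climbs to $1/\log(1/t)$: such a $w$ is continuous and tends to $0$ (so condition $(iii)$ holds with $w(-\eta+)=0$), it leaves $\be$ as the supremum in \eqref{albe}, and yet it gives $\Psi(-\eta+\eps)=O(\eps^{\be+K+1})$. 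For $K>\al-\be$ the quotient then tends to $0$ rather than diverging, so no contradiction with $\liminf_{\eps\downarrow 0}(\cdot)<1$ can be extracted from conditions $(i)$--$(iii)$ alone; ``$\eps^{\be+1}$ times a slowly varying factor'' is precisely what the supremum property does not guarantee.

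For comparison, the paper's own proof is brisk at this very point: it asserts that for $\al>\be$ the right-hand limit in \eqref{Hop} ``is infinity''. What the definition of $\be$ actually yields is a conditional statement: \emph{if} $\lim_{\eps\downarrow 0}\eps^{\be-\al}w(-\eta+\eps)$ exists in $[0,\infty]$, then it must equal $\infty$, since a finite value would force $\eps^{-b}\Psi'(-\eta+\eps)\to 0$ for every $b\in(\be,\al)$, contradicting $\be=\sup$ in \eqref{albe}; Theorem \ref{thmC} then supplies the contradiction. The possibility that this limit fails to exist — which, under condition $(iii)$, can only occur when $w(-\eta+)=0$ — is covered neither by the paper's proof nor by your proposal, and closing it appears to require input beyond conditions $(i)$--$(iii)$, for instance from the random-walk structure of $\psi$ itself. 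So your main-line argument is on par with the published proof, but the difficulty you correctly isolated is real, and the tools you name (continuity of $\Psi'$ and existence of $w(-\eta+)$) do not suffice to resolve it.
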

\begin{proof}
We must have $\al > 0$, since $\al=0$ would imply $w(\eta-) = 
(2\eta)^{-\be}\Psi'(\eta-) = 0$. Further, since $\Psi$ is continuously differentiable
we may apply l'H{\^ o}pital's rule to conclude that
\begin{equation}
\label{Hop}
\begin{array}{l@{}l}
\ds\lim_{\eps\downarrow 0}\frac{\psi([-\eta,-\eta+\eps])}{\psi([\eta-\eps,\eta]}~ &=
 \ds\lim_{\eps\downarrow 0}\frac{\Psi(-\eta+\eps)}{1-\Psi(\eta-\eps)}\\
&= \ds\lim_{\eps\downarrow 0}\frac{\Psi'(-\eta+\eps)}{\Psi'(\eta-\eps)}
= \ds\frac{(2\eta)^{\al-\be}}{w(\eta-)} \lim_{\eps\downarrow 0} \eps^{\be-\al}w(-\eta+\eps),
\end{array} 
\end{equation}
if the limit on the right exists. By definition of $\be$ this limit is zero if $\al<\be$,
while it obviously equals $w(-\eta+)$ if $\al=\be$.
Finally, if $\al>\be$ the right-hand limit in \eqref{Hop} is infinity, which, however,
would contradict Theorem \ref{thmC}. So we must have $\al\leq\be$. The result now
follows from Theorem \ref{thmC}.  
\end{proof}

Note that $w(-\eta+)=0$ if $\ze>-\eta$, so the theorem is consistent with Proposition
\ref{eps}.

\section{Asymptotic results for $\rho_n(-\eta)/\rho_n(\eta)$}
\label{ratio}

Formulating \eqref{RQ} and Proposition \ref{rhoQ} in terms of the normalized
process $\mathcal{\tilde{X}}$ (recall that $\tilde{\eta}=1$), and translating the
results with the help of \eqref{tbd} and \eqref{trho} in terms of quantities
related to the original process $\mathcal{X}$, leads to the next result.
\begin{lemma}
\label{rho0}
We have
\begin{equation}
\label{rho0eq}
\lim_{n\to\infty}\frac{\rho_n(-\eta)}{\rho_n(\eta)} = 0 ~~\llr~~
\sum_{j=0}^\infty \frac{1}{p_j\pi_jQ_j(\eta)Q_{j+1}(\eta)}
\sum_{k=0}^j r_k\pi_kQ_k^2(\eta) =\infty. 
\end{equation}
\end{lemma}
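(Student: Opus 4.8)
The plan is to transport the equivalences already in hand to the normalized process $\mathcal{\tilde X}$ and then rewrite them in terms of the data of $\mathcal X$. First I would invoke \eqref{tXrho}, which says that $\lim_{n\to\infty}\rho_n(-\eta)/\rho_n(\eta)=0$ holds exactly when $\mathcal{\tilde X}$ is asymptotically aperiodic, and then apply Theorem \ref{AR} to $\mathcal{\tilde X}$ (with one-step probabilities $\tp_j,\tq_j,\tr_j$ and associated ratios $\tilde{\pi}_j$) to characterize the latter by
\[
\sum_{j=0}^\infty \frac{1}{\tp_j\,\tilde{\pi}_j}\sum_{k=0}^j \tr_k\,\tilde{\pi}_k = \infty .
\]
Equivalently, one may start from the analogue of \eqref{RQ} for $\mathcal{\tilde X}$ and combine it with Proposition \ref{rhoQ}, using $\tQ_n(1)=1$ and \eqref{trho}; both routes give the same equivalence. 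Everything then reduces to expressing this last sum through $p_j,\pi_j$ and $Q_j(\eta)$.

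The computational heart will be the identity $\tilde{\pi}_j = Q_j^2(\eta)\,\pi_j$, which I would read off from \eqref{tbd} via the telescoping product
\[
\tilde{\pi}_j = \prod_{i=0}^{j-1}\frac{\tp_i}{\tq_{i+1}}
= \prod_{i=0}^{j-1}\frac{Q_{i+1}^2(\eta)}{Q_i^2(\eta)}\,\frac{p_i}{q_{i+1}}
= Q_j^2(\eta)\,\pi_j ,
\]
since $Q_0(\eta)=1$ and the $\eta$-factors in $\tp_i$ and $\tq_{i+1}$ cancel. Substituting this together with $\tr_k=r_k/\eta$ and $\tp_j=(Q_{j+1}(\eta)/Q_j(\eta))(p_j/\eta)$ yields $\tr_k\tilde{\pi}_k=\eta^{-1}r_k\pi_kQ_k^2(\eta)$ and $\tp_j\tilde{\pi}_j=\eta^{-1}p_j\pi_jQ_j(\eta)Q_{j+1}(\eta)$. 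The two factors of $\eta$ then cancel in every summand, so the sum above turns into exactly the right-hand side of \eqref{rho0eq}.

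Combining the two displays delivers the stated equivalence. I do not expect a genuine obstacle: the argument is essentially bookkeeping, and the only point demanding care is the clean telescoping in the product and the cancellation of the $\eta$-factors, which is precisely what renders the final sum scale-invariant. The periodic case needs no separate treatment, since there $r_j\equiv 0$ makes both sides fail (the sum is finite while the limit equals $1$ by Proposition \ref{rhoQ}); and both directions of the imported statements are available for free, as \eqref{RQ}, \eqref{tXrho} and Theorem \ref{AR} are all phrased as equivalences.
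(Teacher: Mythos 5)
Your proposal is correct and follows essentially the same route as the paper: the paper's proof is precisely the instruction to formulate \eqref{RQ} and Proposition \ref{rhoQ} (equivalently, \eqref{tXrho} and Theorem \ref{AR}) for the normalized process $\mathcal{\tilde{X}}$ and translate back via \eqref{tbd} and \eqref{trho}, which is what you do. Your telescoping identity $\tilde{\pi}_j = Q_j^2(\eta)\pi_j$ and the resulting cancellation of the $\eta$-factors supply exactly the bookkeeping the paper leaves implicit.
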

\vspace{.2cm}

Defining $\tL_n$ in analogy with \eqref{Ln} we readily obtain
\[
\tL_\infty = \sum_{j=0}^\infty \frac{1}{p_j\pi_jQ_j(\eta)Q_{j+1}(\eta)}.
\]
So, in analogy with \eqref{recAA}, Lemma \ref{rho0} yields
\begin{equation}
\label{tLrho}
\mathcal{X} ~~\mbox{is~aperiodic~and~} \tL_\infty = \infty
~~\Ra~~ \lim_{n\to\infty}\frac{\rho_n(-\eta)}{\rho_n(\eta)} = 0.
\end{equation}
By \eqref{monQ} we have  $\tL_\infty\geq L_\infty$,
so the premise in \eqref{tLrho} certainly prevails if $\mathcal{X}$ is aperiodic
and recurrent. For later use we note that the condition $\tL_\infty = \infty$ has an
interpretation in terms of the measure $\psi$, namely, by \cite[Theorem 3.2]{DS95a},
\begin{equation}
\label{Thm3.2}
\tL_\infty = \infty ~~\llr~~ \int_{[-\eta,\eta]}\frac{\psi(dx)}{\eta - x} = \infty,
\end{equation}
so that in particular $\psi(\{\eta\} = 0$ if $\tL_\infty < \infty$.

Another sufficient condition for the left-hand side of \eqref{rho0eq} is
obtained in analogy with \eqref{rjpj}, namely
\begin{equation}
\label{rpQ}
\sum_{j=0}^\infty \frac{r_jQ_j(\eta)}{p_jQ_{j+1}(\eta)} = \infty ~~\Ra ~~
\lim_{n\to\infty}\frac{\rho_n(-\eta)}{\rho_n(\eta)}= 0.
\end{equation}
Note that by \eqref{monQ} we have
\begin{equation}
\label{rpr}
\sum_{j=0}^\infty \frac{r_jQ_j(\eta)}{p_jQ_{j+1}(\eta)} \geq
\sum_{j=0}^\infty \frac{r_j}{p_j},
\end{equation}
so that \eqref{rpQ} improves upon the sufficient condition implied by
\eqref{rjpj}, \eqref{tXrho} and \eqref{XtX}. 

The following is a sufficient condition for the left-hand side of \eqref{rho0eq}
in terms of the orthogonalizing measure $\psi$.
\begin{theorem}
\label{ze>-eta}
We have 
\[
\ze > -\eta ~~\Ra ~~ \lim_{n\to\infty}\frac{\rho_n(-\eta)}{\rho_n(\eta)}= 0.
\]
\end{theorem}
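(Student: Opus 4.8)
The plan is to normalize, translate the Christoffel ratio into the growth of a single polynomial sequence at $-1$, and then show that sequence blows up. By \eqref{trho} it suffices to treat the normalized process, so I would assume $\eta=1$ and write $Q_n,\rho_n,\pi_n$ for the data of that process; the hypothesis becomes $-1<\zeta=\inf\operatorname{supp}(\psi)$. Since $\psi$ is symmetric exactly when the process is periodic, $\zeta>-1$ forces aperiodicity, so Proposition \ref{rhoQ} applies and gives $\lim_n\rho_n(-\eta)/\rho_n(\eta)=\lim_nQ_n^2(1)/Q_n^2(-1)=\lim_n b_n^{-2}$, where $b_n:=(-1)^nQ_n(-1)=|Q_n(-1)|>0$ by \eqref{Qneg}. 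Thus the whole statement reduces to proving $b_n\to\infty$.

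First I would show $b_n$ is non-decreasing. Writing \eqref{recQ} at $x=-1$ in terms of $b_n$ gives $p_nb_{n+1}=(1+r_n)b_n-q_nb_{n-1}$, and subtracting $b_n$ after using $p_n+q_n+r_n=1$ yields the increment identity $p_n(b_{n+1}-b_n)=q_n(b_n-b_{n-1})+2r_nb_n$. Since $b_0=1$, $b_1-b_0=2r_0/p_0\ge0$ and $b_n>0$, induction gives $b_{n+1}\ge b_n$, so $b_n\uparrow B\in[1,\infty]$. I would also record that $\sum_n\pi_nb_n^2=\sum_np_n^2(-1)=\infty$: indeed $-1\notin\operatorname{supp}(\psi)$ gives $\psi(\{-1\})=0$, so $1/\rho_n(-1)=\sum_{j<n}p_j^2(-1)\to\infty$ by \eqref{ST} (here $p_n(-1)=\sqrt{\pi_n}Q_n(-1)$ is the orthonormal polynomial).

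It remains to exclude $B<\infty$, which is the heart of the matter. Let $J$ be the bounded self-adjoint Jacobi matrix with diagonal $r_n$ and off-diagonal $\sqrt{p_{n-1}q_n}$, whose spectrum is $\operatorname{supp}(\psi)$, so $\inf\operatorname{spec}(J+1)=\zeta+1>0$ while $\sup\operatorname{spec}(J)=\eta=1$. Using as trial vector the truncated generalized eigenvector $v^{(N)}$ with entries $v_n=p_n(-1)$ for $n\le N$ and $0$ otherwise, the interior terms cancel (because $(J+1)p(-1)=0$ componentwise) and the $n=0$ boundary term vanishes (as $a_1p_1(-1)=-1-r_0$), leaving $\langle v^{(N)},(J+1)v^{(N)}\rangle=\pi_Np_Nb_Nb_{N+1}$ (with $p_N$ the birth probability) and $\|v^{(N)}\|^2=\sum_{n=0}^N\pi_nb_n^2$. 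The Rayleigh bound $\langle v,(J+1)v\rangle\ge(\zeta+1)\|v\|^2$ then gives, for every $N$,
\[
(\zeta+1)\sum_{n=0}^N\pi_nb_n^2\le\pi_Np_Nb_Nb_{N+1}.
\]
If $B<\infty$, then (using $1\le b_n\le B$ and $p_N\le1$) this forces $\sum_{n=0}^N\pi_n\le\tfrac{B^2}{\zeta+1}\pi_N$, hence $\pi_N\ge\tfrac{\zeta+1}{B^2-\zeta-1}\sum_{n<N}\pi_n$, i.e. the weights $\pi_n$ grow geometrically: the process drifts to $+\infty$.

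The main obstacle is to convert this geometric growth of $\pi_n$ into a contradiction with $\sup\operatorname{spec}(J)=1$. The mechanism I would use is the completing-the-square estimate $\langle v,Jv\rangle\le\sum_n(r_n+\lambda p_n+\lambda^{-1}q_n)v_n^2$, valid for every $\lambda>0$; since $p_n+q_n+r_n=1$, the bracket is $\le1-\varepsilon$ as soon as $(1-\lambda)(p_n-q_n/\lambda)\ge\varepsilon$, so a fixed $\lambda<1$ turns a sufficient preponderance of $p_n$ over $q_n$ into a uniform spectral gap $\eta=\sup\operatorname{spec}(J)\le1-\varepsilon<1$, contradicting $\eta=1$. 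The delicate point is precisely here: the variational inequality only controls the $\pi_n$, hence the drift, \emph{on average}, whereas a single $\lambda$ needs the drift to be uniform, so passing from the averaged to the pointwise statement requires extra care. An equivalent and perhaps cleaner route is to observe that $\eta<1$ is exactly the positivity of the bottom of the weighted Dirichlet form $\sum_n\pi_np_n(c_{n+1}-c_n)^2$ relative to $\sum_n\pi_nc_n^2$, and to verify the corresponding Hardy (Muckenhoupt) inequality from the geometric growth of $\sum_{n\le N}\pi_n$. Either way, ruling out $B<\infty$ yields $b_n\to\infty$, and therefore $\rho_n(-\eta)/\rho_n(\eta)\to0$.
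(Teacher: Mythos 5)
Your argument takes a genuinely different route from the paper's. The paper reduces to $r_n\to0$ via \eqref{rpQ}--\eqref{rpr}, compares $J_\psi$ with the symmetrized operator $J_\phi$ (all $r_n$ deleted) through Weyl's theorem on compact perturbations, and splits into two cases: if $\eta>\theta$ then $\eta$ is a mass point of $\psi$ and \eqref{tLrho}, \eqref{Thm3.2} finish; if $\eta=\theta$, a chain-sequence argument combined with Szwarc's theorem \cite{S94} gives a contradiction. You instead normalize to $\eta=1$ (legitimate by \eqref{trho}), reduce via Proposition \ref{rhoQ} and \eqref{Qneg} to showing $b_n:=(-1)^nQ_n(-1)\to\infty$, and attack this variationally. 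Your reduction is valid, the monotonicity of $b_n$ is correct (it reproves \cite[Lemma 2.1]{D18a}), and your key identity checks out: since $q_0=0$ the truncation produces only the term at the top index, so indeed $\langle v^{(N)},(J+I)v^{(N)}\rangle=\pi_Np_Nb_Nb_{N+1}$ and hence $(\zeta+1)\sum_{n\le N}\pi_nb_n^2\le\pi_Np_Nb_Nb_{N+1}$. Completed, this would be more elementary and self-contained than the paper's proof (no Weyl theorem, no chain sequences).

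The gap is in the closing step, and as written neither of your two mechanisms closes it. The first (fixed $\lambda$) fails for exactly the reason you give. The second is insufficient \emph{as stated}: the Muckenhoupt condition for your Hardy inequality $\sum_n\pi_nc_n^2\le C\sum_n\pi_np_n(c_{n+1}-c_n)^2$ is
\[
\sup_N\;\Bigl(\sum_{n=0}^{N}\pi_n\Bigr)\Bigl(\sum_{k=N}^{\infty}\frac{1}{\pi_kp_k}\Bigr)<\infty ,
\]
and geometric growth of $S_N:=\sum_{n\le N}\pi_n$ alone does not imply it, because it gives no lower bound on $\pi_kp_k$: for instance $\pi_n=2^n$, $p_n=4^{-n}$, $q_{n+1}=p_n/2$ is a legitimate chain with geometric $\pi_n$ but $\sum_k(\pi_kp_k)^{-1}=\infty$. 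What rescues you is precisely the factor $p_N$ you discarded via $p_N\le1$: your displayed inequality with $1\le b_n\le B$ gives the stronger statement
\[
\pi_Np_N\;\ge\;\frac{\zeta+1}{B^2}\,S_N ,
\]
and, since $\pi_N\ge\pi_Np_N$, also $S_{N-1}\le\bigl(1-\tfrac{\zeta+1}{B^2}\bigr)S_N$, i.e.\ $S_N\ge\kappa S_{N-1}$ with $\kappa=\bigl(1-\tfrac{\zeta+1}{B^2}\bigr)^{-1}>1$. Combining,
\[
\sum_{k\ge N}\frac{1}{\pi_kp_k}\;\le\;\frac{B^2}{\zeta+1}\sum_{k\ge N}\frac{1}{S_k}
\;\le\;\frac{B^2}{\zeta+1}\cdot\frac{\kappa}{\kappa-1}\cdot\frac{1}{S_N},
\]
so the Muckenhoupt product is bounded by $A:=\frac{B^2\kappa}{(\zeta+1)(\kappa-1)}$ uniformly in $N$. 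By the discrete two-weight Hardy inequality (Muckenhoupt's criterion; best constant between $A$ and $4A$), together with your identity $\langle v,(I-J)v\rangle=\sum_n\pi_np_n(c_{n+1}-c_n)^2$ for $v_n=\sqrt{\pi_n}\,c_n$ finitely supported (which is exact here because $q_0=0$), this yields $\eta=\sup\mbox{spec}(J)\le1-\frac{1}{4A}<1$, contradicting $\eta=1$. Note that the crude Cauchy--Schwarz route to the Hardy inequality does not suffice (it leads to $\sum_n\pi_n/S_n=\infty$), so the appeal to the sharp Muckenhoupt criterion is genuinely needed. With this repair, keeping $p_N$ throughout, your proof is complete.
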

\vspace{.2cm}
\begin{proof}\rm
In view of \eqref{rpQ} and \eqref{rpr} it is no restriction to assume in the
remainder of this proof that $r_j\to 0$. Define the polynomials $S_n$ by
\[
\begin{array}{l}
xS_n(x)=q_nS_{n-1}(x)+p_nS_{n+1}(x),\quad n > 1,\\
S_0(x)=1,\quad  p_0S_1(x)=x,
\end{array}
\]
and let $\phi$ be the measure with respect to which these polynomials are
orthogonal. Then $\phi$ is symmetric about 0. Let $[-\theta, \theta]$ be the
smallest interval containing the support of $\phi$. By $J_\psi$ and $J_\phi$ we
denote the operators
\[
J_\psi a_n = q_na_{n-1}+r_na_n+p_na_{n+1} \mbox{~~and~~}
J_\phi a_n = q_na_{n-1}+p_na_{n+1}.
\]
The spectra of $J_\psi$ and $J_\phi$ on the space of square summable
sequences correspond to supp($\psi$) and supp($\phi$), respectively, and any
mass point of $\psi$ ($\phi$) is an eigenvalue of $J_\psi$ ($J_\phi$) (see, for
example, Van Assche \cite{V96} for these and subsequent results). Since
$r_j\to 0$ the difference $J_\psi-J_\phi$ is a compact operator, so, by Weyl's
theorem on bounded linear operators, supp($\psi$) and supp($\phi$) differ by at
most countably many points, each being a mass point of the corresponding measure.
Since $r_j \geq 0$ we also have $\ze\geq -\theta$ and $\eta\geq\theta$. (This
follows also from \cite[Theorems III.5.7 and IV.2.1]{C78}.) 
If $\eta>\theta$ then $\eta$ is a mass point of $\psi$ and, by \eqref{tLrho}
and \eqref{Thm3.2}, we are done. On the other hand, if $\eta=\theta$ then
$\ze>-\theta$, so that $-\theta$ is a mass point of $\phi$ and, by symmetry,
also $\theta=\eta$ is a mass point of $\phi$. It follows that
\begin{equation}
\label{phi}
\int_{[-\eta,\eta]}\frac{\phi(dx)}{\eta-x} = \infty.
\end{equation}
From \cite[Theorem IV.2.1]{C78} and \eqref{loweta} we know that the sequence 
\[
\left\{\frac{p_{n-1}q_n}{(\eta-r_{n-1})(\eta-r_n)}\right\}_n
\]
constitutes a {\em chain sequence\/}. Moreover, $\psi$ not being symmetric, we
have $r_j>0$ for some $j$, while
\[
\frac{p_{n-1}q_n}{\eta^2} \leq \frac{p_{n-1}q_n}{(\eta-r_{n-1})(\eta-r_n)},
\]
so that $\{p_{n-1}q_n/\eta^2\}_n$ constitutes a chain sequence that does not
determine its parameters uniquely. But this contradicts \eqref{phi}, by
\cite[Theorem 1]{S94}, so $\eta=\theta$ is not possible.
\end{proof}

\noindent
{\bf Remark}. An alternative proof involving a probabilistic argument is the
following. Define the polynomials $\tS_n$ by
\[
\begin{array}{l}
x\tS_n(x)=\tq_n\tS_{n-1}(x)+\tp_n\tS_{n+1}(x),\quad n > 1,\\
\tS_0(x)=1,\quad  \tp_0\tS_1(x)=x,
\end{array}
\]
with $\tp_n$ and $\tq_n$ as in \eqref{tbd}.
Since $\tp_j+\tq_j = 1-\tr_j \leq 1$, the polynomials $\tS_n$ correspond to a 
discrete-time birth-death process $\mathcal{Y}$ with an ignored state $\de$
that can be reached with probability $\tr_j$ from state $j \in \mathcal{N}$
(see \cite[Sect.~3]{CD06}).
Since $\tr_j > 0$ for at least one $j\in \mathcal{N}$, the process $\mathcal{Y}$
is transient and, as a consequence (see \cite[p.~70]{KM59}), the
(symmetric) measure $\tilde{\phi}$ associated with $\mathcal{Y}$ satisfies
\begin{equation}
\label{tphi}
\int_{[-1,1]}\frac{\tilde{\phi}(dx)}{1-x} < \infty.
\end{equation}
As before, let $[-\tilde\theta, \tilde\theta]$ be the smallest interval
containing the support of $\tilde{\phi}$. Now applying the argument involving
Weyl's theorem in the proof above to the operators $J_{\tilde{\psi}}$ and
$J_{\tilde{\phi}}$, the assumption $\tilde\theta = \tilde{\eta}~(=1)$ implies
$-\tilde\theta=-1<\ze/\eta=\tilde\ze$, so that $-1$, and hence, by symmetry,
$1$, is a mass point of $\tilde\phi$.  This, however, contradicts $\eqref{tphi}$.
On the other hand, the assumption $\tilde\theta < 1$ implies that $1$ is a mass
point of $\tilde{\psi}$, and hence $\eta$ a mass point of $\psi$, which, by
\eqref{tLrho} and \eqref{Thm3.2}, yields the result. \hfill $\Box$
\medskip

Our next step will be to study the asymptotic behaviour of 
$\rho_n(-\eta)/\rho_n(\eta)$ in the specific setting of Theorem \ref{Cw}. So we
will now assume that the random walk measure $\psi$ satisfies the conditions
$(i)$, $(ii)$ and $(iii)$ preceding Theorem \ref{Cw}, so that
supp($\psi$) $=[-\eta,\eta]$. In addition we will assume that $\psi$ is
{\em regular} in the sense of Ullman-Stahl-Totik (see Stahl and Totik
\cite[Def.~3.1.2]{ST92}), which amounts to assuming that
$\lim_{n\to\infty}\ga_n^{1/n}=2\eta$. (Recall that $\ga_n$ is the coefficient
of $x^n$ in $p_n(x)$.) Applying Theorem 1.2 of Danka and Totik \cite{DT18}
then leads to the conclusion that
\[
\lim_{n\to\infty} n^{2\al+2}\rho_n(\eta) = (2\eta)^{-\al-1}w(\eta-)\Ga(\al+1)\Ga(\al+2).
\]
By considering the measure with respect to which the polynomials $(-1)^nQ_n(-x)$
are orthogonal, one obtains in a similar way
\[
\lim_{n\to\infty} n^{2\be+2}\rho_n(-\eta) = (2\eta)^{-\be-1}w(-\eta+)\Ga(\be+1)\Ga(\be+2).
\]
From Theorem \ref{Cw} we know already that $0 < \al\leq\be$, so the preceding
limit results lead to the following theorem.
\begin{theorem}
\label{rhow}
If $\mathcal{X}$ is aperiodic, and  the corresponding measure $\psi$ is regular
and satisfies the conditions $(i)$, $(ii)$ and $(iii)$ preceding Theorem \ref{Cw},
then $0 < \al\leq\be$ and
\[
\lim_{n\to\infty} \frac{\rho_n(-\eta)}{\rho_n(\eta)} =
\left\{
\begin{array}{l@{}l}
0 &\mbox{if}~~ \al<\be\\
\ds\frac{w(-\eta+)}{w(\eta-)} \quad &\mbox{if}~~ \al=\be.
\end{array}
\right.
\]
\end{theorem}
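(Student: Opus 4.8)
The plan is to read the result off directly from the two endpoint asymptotics displayed just above, using only the ordering $0<\al\le\be$ supplied by Theorem \ref{Cw} together with a comparison of the powers of $n$. Almost all of the analytic content sits in those two asymptotics; what is left for the theorem itself is a short computation.

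First I would record, from the two displayed limits, that for large $n$
\[
\rho_n(\eta)=n^{-2\al-2}\bigl[(2\eta)^{-\al-1}w(\eta-)\Ga(\al+1)\Ga(\al+2)+o(1)\bigr]
\]
and
\[
\rho_n(-\eta)=n^{-2\be-2}\bigl[(2\eta)^{-\be-1}w(-\eta+)\Ga(\be+1)\Ga(\be+2)+o(1)\bigr].
\]
The quotient $\rho_n(-\eta)/\rho_n(\eta)$ is well defined, since Christoffel functions are strictly positive; and $w(\eta-)>0$ by condition $(iii)$, so the bracketed factor in the first line tends to a strictly positive constant. Dividing and isolating the powers of $n$ then gives
\[
\frac{\rho_n(-\eta)}{\rho_n(\eta)}=n^{2\al-2\be}\,
\frac{(2\eta)^{-\be-1}w(-\eta+)\Ga(\be+1)\Ga(\be+2)+o(1)}
{(2\eta)^{-\al-1}w(\eta-)\Ga(\al+1)\Ga(\al+2)+o(1)},
\]
in which the fraction on the right converges to a finite nonnegative limit.

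The two cases now separate at once. If $\al<\be$ the prefactor $n^{2\al-2\be}$ tends to $0$ while the fraction stays bounded, so the ratio tends to $0$. If $\al=\be$ the prefactor equals $1$, the factors $(2\eta)^{-\al-1}$ and $(2\eta)^{-\be-1}$ coincide, and $\Ga(\be+1)\Ga(\be+2)=\Ga(\al+1)\Ga(\al+2)$; all of these cancel and the limit collapses to $w(-\eta+)/w(\eta-)$, the claimed value. The inequalities $0<\al\le\be$ that make this dichotomy exhaustive are already furnished by Theorem \ref{Cw}.

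The one genuinely delicate point -- which I would treat as the real obstacle, even though it is quoted rather than proved in the passage above -- is the justification of the second asymptotic, for $\rho_n(-\eta)$, through the reflected measure. One must check that the polynomials $(-1)^nQ_n(-x)$ are orthogonal with respect to the measure with density $x\mapsto\Psi'(-x)$, that this reflected measure inherits Ullman--Stahl--Totik regularity from $\psi$, and that near its right endpoint $\eta$ its density behaves like $(\eta-x)^{\be}$ times a positive continuous factor whose endpoint value reproduces the constant $w(-\eta+)$ after the $\eta$-dependent normalisation in Theorem 1.2 of Danka and Totik. Only once these hypotheses are verified does that theorem apply to the reflected measure and deliver the $n^{2\be+2}$-asymptotic with the exponent $\be$ (rather than $\al$) at the endpoint; the corresponding check for $\rho_n(\eta)$ is the same computation with the local exponent $\al$ and the factor $w(\eta-)$ in place of $\be$ and $w(-\eta+)$.
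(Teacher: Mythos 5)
Your proposal is correct and takes essentially the same route as the paper: the paper's entire proof consists of the two Danka--Totik endpoint asymptotics derived in the paragraph preceding the theorem (with the limit at $-\eta$ obtained, exactly as you describe, by passing to the reflected measure orthogonalizing $(-1)^nQ_n(-x)$), after which the dichotomy follows by dividing and invoking $0<\al\le\be$ from Theorem \ref{Cw}. The verification you flag as the delicate point is likewise left at the level of a one-line reduction in the paper, so your treatment is faithful to it.
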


We note again that $w(-\eta+)=0$ if $\ze>-\eta$, so the result is consistent with
Theorem \ref{ze>-eta}.

\section{Results}
\label{res}

In this section we will verify Conjecture \ref{c2} under mild regularity conditions
on the one-step transition probabilities of the process $\mathcal{X}$ and the
associated random walk measure $\psi$. Unless stated otherwise we will
assume $\mathcal{X}$, and hence $\mathcal{\tilde{X}}$, to be aperiodic, that
is, $r_j>0$ for at least one state $j\in \mathcal{N}$. We may further restrict our
analysis to the setting in which
\[
\sum_{j = 0}^\infty \frac{1}{p_j\pi_j} \sum_{k=0}^j r_k\pi_k<\infty
\mbox{~~and~~} \tL_\infty<\infty,
\]
since we know already by \eqref{Rrho}, \eqref{tLrho} and Theorem \ref{supCrho}
that the conjecture holds true in the opposite case, both sides of \eqref{con2}
then being equal to zero.
In view of \eqref{ineqr} we thus have $\sum r_n < \infty$, and hence $r_n\to 0$
as $n\to\infty$.

In what follows we denote the smallest and largest {\em limit\/} point of
supp($\psi$) by $\si$ and $\tau$, respectively. Evidently, $\ze\leq\si\leq\tau\leq\eta$.
The next lemma shows that we can draw some useful conclusions on the measure
$\psi$ if, besides $\tL_\infty<\infty$ and $r_n\to 0$, the product $p_{n-1}q_n$
tends to a limit as $n\to\infty$. 

\begin{lemma}
\label{blum}
Let $\ds\lim_{n\to\infty}r_n = 0$ and $\tL_\infty<\infty$. If $\ds\lim_{n\to\infty}
p_{n-1}q_n = \be$, then $\eta=\tau=2\sqrt{\be}>0$ and $\ze=\si=-2\sqrt{\be}$.
\end{lemma}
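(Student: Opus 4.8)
The plan is to read the statement as a fact about the essential spectrum of the Jacobi matrix attached to $\psi$, exactly as in the proof of Theorem~\ref{ze>-eta}. Rewriting the recurrence \eqref{recQ} in terms of the orthonormal polynomials \eqref{pn} (using \eqref{gamma}), one sees that $J_\psi$ is the tridiagonal operator on $\ell^2$ with diagonal entries $r_n$ and off-diagonal entries $\sqrt{p_{n-1}q_n}$, whose spectrum is $\mathrm{supp}(\psi)$ and whose mass points are its eigenvalues (Van Assche \cite{V96}). By hypothesis $r_n\to 0$ and $\sqrt{p_{n-1}q_n}\to\sqrt{\be}$, so $J_\psi$ differs from the free Jacobi matrix $J_0$ with constant diagonal $0$ and constant off-diagonal $\sqrt{\be}$ by a banded operator whose entries tend to $0$, hence by a compact operator. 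Weyl's theorem then gives $\sigma_{\mathrm{ess}}(J_\psi)=\sigma_{\mathrm{ess}}(J_0)=\sigma(J_0)=[-2\sqrt{\be},2\sqrt{\be}]$. Since the essential spectrum is precisely the set of non-isolated points of $\mathrm{supp}(\psi)$ (the spectrum being simple), this identifies the limit points of the support: $\si=-2\sqrt{\be}$ and $\tau=2\sqrt{\be}$.

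Next I would pin down the endpoints. For the upper one, $\tau\le\eta$, and any point of $\mathrm{supp}(\psi)$ lying strictly above $\tau=\sup\sigma_{\mathrm{ess}}(J_\psi)$ would be an isolated eigenvalue, hence a mass point with $\psi(\{\eta\})>0$. But $\tL_\infty<\infty$ forces $\psi(\{\eta\})=0$ through \eqref{Thm3.2}, so $\eta$ cannot be isolated, and therefore $\eta=\tau=2\sqrt{\be}$. Because $\eta>0$ by \eqref{loweta}, this also yields $\be>0$, as claimed.

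For the lower endpoint I would avoid a separate spectral argument --- the hypotheses are one-sided, in that $\tL_\infty<\infty$ controls only the top of the support --- and instead exploit \eqref{infsupp}: since $\eta=2\sqrt{\be}$ we get $\ze\ge-\eta=-2\sqrt{\be}=\si$, while trivially $\ze=\inf\mathrm{supp}(\psi)\le\si$, so $\ze=\si=-2\sqrt{\be}$. The technical core is the Weyl/compactness identification of $\sigma_{\mathrm{ess}}(J_\psi)$ with $[-2\sqrt{\be},2\sqrt{\be}]$ and its reading as the limit-point set of the support; this is routine and already used in the excerpt. The genuinely delicate point is the asymmetry of the hypotheses: rather than trying to rule out an isolated eigenvalue below $-2\sqrt{\be}$ directly, the clean route to the bottom endpoint is to pass through $\eta$ via the bound \eqref{infsupp}.
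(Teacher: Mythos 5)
Your proof is correct and follows essentially the same route as the paper: the paper identifies $\si=-\tau=-2\sqrt{\be}$ by invoking Blumenthal's theorem (Chihara \cite{C68}) for the monic recurrence, then rules out $\eta>\tau$ exactly as you do (an isolated $\eta$ would be a mass point, contradicting $\tL_\infty<\infty$ via \eqref{Thm3.2}), and finally obtains $\ze=\si$ from \eqref{infsupp} in the same one-line way. The only difference is that where the paper cites Blumenthal's theorem, you re-derive that fact through the compact-perturbation/Weyl argument on the orthonormal Jacobi operator --- precisely the tool the paper itself uses in the proof of Theorem \ref{ze>-eta} --- so your version is self-contained but structurally identical.
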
 
\begin{proof}\rm
The {\it monic} polynomials $P_n=p_0\dots p_{n-1}Q_n$ satisfy the recurrence
\[
\begin{array}{l}
P_{n+1}(x)=(x-r_n)P_n(x)-p_{n-1}q_nP_{n-1}(x), \quad n > 0,\\
P_0(x)=1,\quad  P_1(x)=x-r_0.
\end{array}
\]
By Blumenthal's theorem (see Chihara \cite{C68}) we have $\si=-\tau=-2\sqrt{\be}$
when $r_n\to 0$ and $p_{n-1}q_n\to\be$ as $n\to\infty$.
If $\eta>\tau$ then $\eta$ must be an isolated point of supp($\psi$), and hence
$\psi(\{\eta\})>0$. But in view of \eqref{Thm3.2} this would contradict our
assumption $\tL_\infty<\infty$, so we must have $\eta=\tau = 2\sqrt{\be}$
and hence $\be>0$, by \eqref{loweta}. Finally, by \eqref{infsupp},
$\ze\geq -\eta$, but since $\ze\leq\si=-\eta$, we must have $\ze=\si$.
\end{proof}

Note that, as a consequence of this lemma, Theorem \ref{ze>-eta} is of no use
to us in verifying Conjecture \ref{c2} when $p_{n-1}q_n$ tends to limit, for in
that case $\ze>-\eta$ can only occur if $\tL_\infty=\infty$ or $r_n\not\to 0$.

Regarding the parameters $p_j$ and $q_j$ we will now impose the condition
\begin{equation}
\label{ass1}
\sum _{j=1}^\infty |p_jq_{j+1}-p_{j-1}q_j|<\infty,
\end{equation}
implying in particular that $p_nq_{n+1}$ tends to a limit. We will further assume
\begin{equation}
\label{1/4}
\lim_{n\to\infty}p_nq_{n+1} = \frac{1}{4},
\end{equation}
so that, by the
previous lemma, $\ze=\si=-1$ and $\eta=\tau=1$. The latter assumption entails
no loss of generality, since, in view of \eqref{trho} and \eqref{tC}, verifying
Conjecture \ref{c2} is equivalent to verifying a similar conjecture in terms of
$\mathcal{\tilde{X}}$, while by \eqref{tbd} and the previous lemma,
\[
\tp_n\tq_{n+1} = \frac{p_nq_{n+1}}{\eta^2} \to \frac{\be}{\eta^2} = \frac{1}{4}
\mbox{~~as~~} n\to\infty.
\]
Letting $\Psi$ as in \eqref{Psi} we can now invoke a theorem of M\'at\'e and Nevai
\cite{MN83} stating that $\Psi$ is continuously differentiable in $(-1,1)$ and
$\Psi'(x)>0$ for $x\in (-1,1)$, so that supp($\psi$) $= [-1,1]$. In view of
\eqref{gamma} and \eqref{1/4} we also have $\lim_{n\to\infty} \ga_n^{1/n} = 2$,
so that $\psi$ is regular in the sense of Ullman-Stahl-Totik.

In what follows we will assume that the limits $\Psi'(-1+)$ and  $\Psi'(1-)$ exist.
Recalling our earlier assumptions that $\mathcal{X}$ is aperiodic and $\tL_\infty < \infty$,
we now have, by \eqref{whit} and \eqref{Thm3.2}, not
only $\Psi(-1+) = \Psi(-1) = 0$ and $\Psi(1-) = \Psi(1)=1$ (implying the continuity of
$\Psi$), but also $\Psi'(-1+) = \Psi'(1-) = 0$, which implies the continuity of $\Psi'$
on $\mathbb{R}$. Next defining $\al$, $\be$ and $w$ as in \eqref{albe} and
\eqref{w}, the Theorems \ref{Cw} and \ref{rhow} lead to the conclusion that, under
the preceding conditions and if $0<w(1-)<\infty$, we have $0<\al\leq\be$ and
\begin{equation}
\label{wCw}
\lim_{n\to\infty} C_n(\psi) = \lim_{n\to\infty} \frac{\rho_n(-1)}{\rho_n(1)} =
\left\{
\begin{array}{l@{}l}
0 &\mbox{if}~~ \al<\be\\
\ds\frac{w(-1+)}{w(1-)} \quad &\mbox{if}~~ \al=\be.
\end{array}
\right.
\end{equation}

Collecting all our results we can now establish the following theorem, which
amounts to validity of Conjecture \ref{c2} under mild regularity conditions.
\begin{theorem}
\label{main}
Let $\mathcal{X}$ be a birth-death process with corresponding random walk measure
$\psi$, and let $\Psi$, $\al$, $\be$ and $w$ be defined as in \eqref{Psi},\eqref{albe}
and \eqref{w}.\\
$(i)$ If $\mathcal{X}$ is periodic, then
\[
\lim_{n\to\infty} C_n(\psi) = \lim_{n\to\infty} \frac{\rho_n(-\eta)}{\rho_n(\eta)} = 1.
\]
$(ii)$ If  $\mathcal{X}$ is aperiodic and
\begin{equation}
\label{ass2}
\sum_{j=0}^\infty \frac{1}{p_j\pi_j} \sum_{k=0}^j r_k\pi_k = \infty
\mbox{~~or~~}  
\sum_{j=0}^\infty \frac{1}{p_j\pi_jQ_j(\eta)Q_{j+1}(\eta)} = \infty,
\end{equation}
then
\[
\lim_{n\to\infty} C_n(\psi) = \lim_{n\to\infty} \frac{\rho_n(-\eta)}{\rho_n(\eta)} = 0.
\]
$(iii)$ If $\mathcal{X}$ is aperiodic, \eqref{ass2} does not hold (so that $r_n\to 0$),
and in addition,\\
\hspace*{0.4cm}$(a)$ the one-step transition probabilities of $\mathcal{X}$
satisfy $\sum _{j=1}^\infty |p_jq_{j+1}-p_{j-1}q_j|<\infty$,\\
\hspace*{0.4cm}$(b)$ the limits $\Psi'(-\eta+)$ and  $\Psi'(\eta-)$ exist,\\
\hspace*{0.4cm}$(c)$ the quantities $\al$ and $\be$ are finite,\\
\hspace*{0.4cm}$(d)$ the limits $w(-\eta+)$ and $w(\eta-)$ exist and are finite, and
$w(\eta-)>0$,\\ then $0<\al\leq\be$ and \vspace*{-0.5cm}
\begin{equation}
\label{wCweta}
\lim_{n\to\infty} C_n(\psi) = \lim_{n\to\infty} \frac{\rho_n(-\eta)}{\rho_n(\eta)} = 
\left\{
\begin{array}{l@{}l}
0 &\mbox{if}~~ \al<\be\\
\ds\frac{w(-\eta+)}{w(\eta-)} \quad &\mbox{if}~~ \al=\be.
\end{array}
\right.
\end{equation}
\end{theorem}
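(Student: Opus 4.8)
The plan is to dispatch parts $(i)$ and $(ii)$ directly from results already in hand, and to reserve the real work for part $(iii)$, where the strategy is to verify the hypotheses of Theorems \ref{Cw} and \ref{rhow} and then read off their common conclusion. For $(i)$, periodicity gives $C_n(\psi)=1$ for all $n$ (the discussion following Theorem \ref{srlpth}) and $\rho_n(-\eta)/\rho_n(\eta)=1$ for all $n$ (Proposition \ref{rhoQ}), so both limits equal $1$. For $(ii)$ I would split on the disjunction in \eqref{ass2}: divergence of the first sum gives $\rho_n(-\eta)/\rho_n(\eta)\to 0$ by \eqref{Rrho}, and $\tL_\infty=\infty$ gives the same conclusion by \eqref{tLrho}; in either case Theorem \ref{supCrho}, together with $C_n(\psi)\ge 0$, squeezes $\lim_{n\to\infty}C_n(\psi)=0$ as well.

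For part $(iii)$ I would first normalize: by \eqref{tC} the sequence $C_n(\psi)$ is unchanged under normalization and by \eqref{trho} the limit of $\rho_n(-\eta)/\rho_n(\eta)$ is unchanged, so it is no loss of generality to take $\eta=1$. Since \eqref{ass2} fails, \eqref{ineqr} yields $r_n\to 0$ and $\tL_\infty<\infty$; hypothesis $(a)$ makes $p_nq_{n+1}$ convergent, so Lemma \ref{blum} applies and, after normalization, pins $\lim_{n\to\infty}p_nq_{n+1}=1/4$, together with $\ze=\si=-1$ and $\eta=\tau=1$. The M\'at\'e--Nevai density theorem then shows that $\Psi$ is continuously differentiable on $(-1,1)$ with $\Psi'>0$ there, while \eqref{gamma} and $p_nq_{n+1}\to 1/4$ give $\ga_n^{1/n}\to 2$, i.e.\ regularity of $\psi$ in the Ullman-Stahl-Totik sense.

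The one genuinely analytic sub-step is to upgrade this interior smoothness to the full condition $(i)$ preceding Theorem \ref{Cw}, namely continuous differentiability on all of $\mathbb{R}$. Here I would use the two integrability facts now available: aperiodicity gives $\int \psi(dx)/(1+x)<\infty$ through \eqref{whit}, and $\tL_\infty<\infty$ gives $\int \psi(dx)/(1-x)<\infty$ through \eqref{Thm3.2}. Combined with the existence of the endpoint limits assumed in $(b)$, these integrals can be finite only if $\Psi'(-1+)=\Psi'(1-)=0$, so $\Psi'$ extends continuously across $\pm 1$ and condition $(i)$ holds in full; conditions $(ii)$ and $(iii)$ of Theorem \ref{Cw} are then exactly hypotheses $(c)$ and $(d)$. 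With every hypothesis verified, Theorem \ref{Cw} yields $0<\al\le\be$ and the stated value of $\lim_{n\to\infty}C_n(\psi)$, and Theorem \ref{rhow} yields the identical value for $\lim_{n\to\infty}\rho_n(-\eta)/\rho_n(\eta)$, establishing \eqref{wCweta}. I expect the main obstacle to lie not in any single hard estimate but in this assembly: the deep external inputs---M\'at\'e--Nevai for the density and Danka--Totik for the boundary Christoffel asymptotics inside Theorem \ref{rhow}---all presuppose the boundary vanishing of $\Psi'$, so the crux is certifying that the probabilistic hypotheses $(a)$--$(d)$ really deliver the full regularity package those theorems demand.
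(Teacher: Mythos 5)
Your proposal is correct and follows essentially the same route as the paper: parts $(i)$ and $(ii)$ are dispatched exactly as in the paper via Proposition \ref{rhoQ}, \eqref{Rrho}, \eqref{tLrho} and Theorem \ref{supCrho}, and for part $(iii)$ you reproduce the paper's assembly --- normalization via \eqref{tC} and \eqref{trho}, Lemma \ref{blum} to pin $\tp_n\tq_{n+1}\to 1/4$ and the support, M\'at\'e--Nevai for interior smoothness, \eqref{gamma} for regularity, the endpoint vanishing $\Psi'(\pm\eta\mp)=0$ from \eqref{whit} and \eqref{Thm3.2}, and finally Theorems \ref{Cw} and \ref{rhow}. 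The only detail the paper makes explicit that you leave implicit is the scaling $\tilde{\Psi}'(x)=\eta\Psi'(\eta x)$ and $\tilde{w}(x)=\eta^{\al+\be+1}w(\eta x)$, which shows the hypotheses and the limit value in \eqref{wCweta} are invariant under normalization; this is routine.
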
 
\begin{proof}\rm
The first statement is implied by the fact that $\psi$ is symmetric if $\mathcal{X}$
is periodic, while the second statement follows from \eqref{Rrho}, \eqref{tLrho} and
Theorem \ref{supCrho}.
To prove the third statement we apply to the normalized version $\tilde{\mathcal{X}}$
of $\mathcal{X}$ the argument preceding this theorem. Obviously,
$\tilde{\Psi}'(x) = \eta\Psi'(\eta x)$ and $\tilde{w}(x) = \eta^{\al+\be+1}w(\eta x)$,
so subsequently rephrasing, with the help of \eqref{trho} and \eqref{tC}, conclusion
\eqref{wCw} and the conditions preceding it in terms of the original process
$\mathcal{X}$, gives us \eqref{wCweta}.
\end{proof}

\section{Concluding remarks}
\label{conc}

The previous analysis remains largely valid if we allow $p_j+q_j+r_j \leq 1$ and
interpret $\ka_j:=1-p_j-q_j-r_j$ as the {\em killing probability} of $\mathcal{X}$
in state $j$, that is, the probability of absorption into an (ignored) cemetary state
$\pa$, say. Karlin and McGregor's representation formula \eqref{repP} still holds
in this more general setting, but if $\ka_j>0$ for at least one state $j\in S$ (so
that $\pa$ is accessible from $\mathcal{N}$) we have to make some adjustments
to the preceding analysis.

First, asymptotic aperiodicity is not defined for $\mathcal{X}$ in this case, but
since the normalization \eqref{tbd} results in a process $\mathcal{\tilde{X}}$
which, as before, satisfies $\tp_j+\tq_j+\tr_j=1$ for all $j\in\mathcal{N}$, the
content of Subsection \ref{AA} remains relevant if $\mathcal{X}$ is replaced by
$\mathcal{\tilde{X}}$ (which will be different from $\mathcal{X}$, also if $\eta
= 1$.) Then, from \cite[Eq. (25)]{CD06} we know that
\[
Q_{n+1}(1) = 1 + \sum_{j=0}^n\frac{1}{p_j\pi_j} \sum_{k=0}^j \ka_k\pi_kQ_k(1),
\quad n\geq 0,
\]
so that $Q_{n+1}(1)\geq Q_n(1)$ with strict inequality for $n$ sufficiently large.
So we no longer have $Q_n(1)=1$ and therefore cannot assume the validity
of \eqref{monQ} and its consequence \eqref{rpr}. Note that
\begin{equation}
\label{Qsum}
\lim_{n\to\infty} Q_n(1) = \infty ~~\llr~~
\sum_{j = 0}^\infty \frac{1}{p_j\pi_j} \sum_{k=0}^j \ka_k\pi_k = \infty,
\end{equation}
while \cite[Theorem 5]{CD06} tells us that $\tau_j$, the probability of
eventual absorption at $\pa$ from state $j$, is given by
\[
\tau_j = 1 - \frac{Q_j(1)}{Q_\infty(1)}, \quad j\in\mathcal{N}.
\]
So eventual absorption at $\pa$ is certain if and only if $\lim_{n\to\infty}
Q_n(1)=\infty$.

It is easily seen that \cite[Lemma 2.1]{D18a}, and hence \eqref{RQ}, remain valid
in the more general setting at hand, but that is not so obvious for \eqref{QQ}.
In fact, it may be shown that \eqref{QQ} should be replaced by
\begin{equation}
\label{QQb}
\lim_{n\to\infty} \left|\frac{Q_n(1)}{Q_n(-1)}\right| = 0
~~\Ra~~ \lim_{n\to\infty} \left|\frac{Q_n(\eta)}{Q_n(-\eta)}\right| = 0,
\end{equation}
and so the
conclusion \eqref{Rrho} cannot be maintained. However, in view of \eqref{Qsum},
we may replace \eqref{Rrho} by
\begin{equation}
\label{RKrho}
\sum_{j=0}^\infty \frac{1}{p_j\pi_j} \sum_{k=0}^j r_k\pi_k =\infty ~~\mbox{and}~~
\sum_{j=0}^\infty \frac{1}{p_j\pi_j} \sum_{k=0}^j \ka_k\pi_k <\infty
~~\Ra~~ \lim_{n\to\infty} \frac{\rho_n(-\eta)}{\rho_n(\eta)}= 0.
\end{equation}
In other words, \eqref{Rrho} remains valid if we add the condition that 
absorption at $\pa$ is {\em not} certain.
This has consequences for Theorem \ref{main}, where the first condition in
\eqref{ass2} should be replaced by the two conditions in
\eqref{RKrho}.

All other results remain valid.

\vspace{1.0cm}
\noindent
{\Large{\bf Acknowledgement}}

\bigskip
\noindent
The authors thank Vilmos Totik for helpful comments and suggestions.



\begin{thebibliography}{99}

\bibitem{AI84} R.A. Askey and M.E.H. Ismail, {\it Recurrence relations, continued
fractions and orthogonal polynomials\/}. Memoirs of the American Mathematical
Society, no. 300, American Mathematical Society, Providence, R.I., 1984.

\bibitem{C68} T.S.~Chihara, Orthogonal polynomials whose zeros are dense in intervals.
{\it J. Math. Anal. Appl.\/} {\bf 24} (1968) 362-371.

\bibitem{C78} T.S.~Chihara, {\it An Introduction to Orthogonal Polynomials\/}.
Gordon and Breach, New York, 1978.


\bibitem{CD06} P.~Coolen-Schrijner and E.A.~van~Doorn, Quasistationary
distributions for a class of discrete-time Markov chains. {\it Methodol. Comput. Appl.
Probab.\/} {\bf 8} (2006) 449-465.

\bibitem{DT18} T.~Danka and V.~Totik, Christoffel functions with power type weights.
{\it J. Eur. Math. Soc.\/} {\bf 20} (2018), 747-796. 

\bibitem{D18a} E.A.~van~Doorn, On the strong ratio limit property for
discrete-time birth-death processes. {\it SIGMA Symmetry Integrability Geom.
Methods Appl.\/} {\bf 14} (2018) 047, 9 pages.

\bibitem{D18b} E.A.~van~Doorn, Asymptotic period of an aperiodic Markov chain.
{\it Markov Process. Related Fields\/} {\bf 24} (2018) 759-778.

\bibitem{DS93} E.A.~van~Doorn and P.~Schrijner, Random walk polynomials and
random walk measures. {\it J. Comput. Appl. Math.\/} {\bf 49} (1993) 289--296.

\bibitem{DS95a} E.A.~van~Doorn and P.~Schrijner, Geometric ergodicity and
quasi-station\-arity in discrete-time birth-death processes. {\it J. Austral.
Math. Soc. (B)\/} {\bf 37} (1995) 121--144.

\bibitem{DS95b} E.A.~van~Doorn and P.~Schrijner, Ratio limits and limiting 
conditional distributions for discrete-time birth-death processes. {\it J. 
Math. Anal. Appl.\/} {\bf 190} (1995) 263--284.

\bibitem{KM59}  S.~Karlin and J.L.~McGregor, Random walks. {\it Illinois J.
Math.\/} {\bf 3} (1959) 66--81.

\bibitem{K95} H.~Kesten, A ratio limit theorem for (sub) Markov chains on
$\{1,2,\dots\}$ with bounded jumps. {\it Adv. Appl. Probab.\/} {\bf 27}
(1995) 652-691.

\bibitem{MN83} A.~M\'at\'e and P.~Nevai, Orthogonal polynomials and absolutely
continuous measures. pp. 611-617 in: {\it Approximation Theory IV\/}, C.K. Chui
et al., eds., Academic Press, New York, 1983.

\bibitem{O61} S.~Orey, Strong ratio limit property. {\it Bull. Amer. Math.
Soc.\/} {\bf 67} (1961) 571-574.

\bibitem{P65} W.E.~Pruitt, Strong ratio limit property for $R$-recurrent
Markov chains. {\it Proc. Amer. Math. Soc.\/} {\bf 16} (1965) 196-200.

\bibitem{ST63}  J.A.~Shohat and J.D.~Tamarkin, {\it The Problem of Moments\/}.
Mathematical Surveys I, American Mathematical Society, Providence, R.I., rev. ed.,
1963.


\bibitem{ST92} H.~Stahl and V.~Totik, {\it General Orthogonal Polynomials\/}.
Encyclopedia of Mathematics and its Applications 43, Cambridge University Press,
Cambridge, 1992.

\bibitem{S94} R.~Szwarc, Chain sequences and compact perturbations of
orthogonal polynomials. {\it Math. Z.\/} {\bf 217} (1994) 57-71.

\bibitem{V96} W.~Van~Assche, Compact Jacobi matrices: from Stieltjes to Krein
and $M(a,b)$. {\it Ann. Fac. Sci. Toulouse Math. (6)\/} {\bf S5} (1996) 195--215.

\bibitem{W78}  T.A.~Whitehurst, {\it On random walks and orthogonal polynomials\/}.
Ph.D. Thesis (Indiana University, Bloomington), 1978.

\bibitem{W82}  T.A.~Whitehurst, An application of orthogonal polynomials to
random walks. {\it Pacific J. Math.\/} {\bf 99} (1982) 205--213.

\end{thebibliography}
\end{document}